\providecommand{\U}[1]{\protect\rule{.1in}{.1in}}
\newtheorem{theorem}{Theorem}
\newtheorem{definition}[theorem]{Definition}
\newtheorem{lemma}[theorem]{Lemma}
\newtheorem{proposition}[theorem]{Proposition}
\newtheorem{remark}[theorem]{Remark}
\newcommand{\R} {\mathbb{R}}
\newcommand{\Z} {\mathbb{Z}}
\newcommand{\eps}{\epsilon}
\newcommand{\eto}{\stackrel{\eps\to 0}{\longrightarrow}}
\newcommand{\weto}{\stackrel{\eps\to 0}{\rightharpoonup}}
\newenvironment{proof}[1][Proof]{\noindent\textbf{#1.} }{\ \rule{0.5em}{0.5em}}
\numberwithin{equation}{section}
\let\pdfoutput=\undefined\fi
\begin{document}

\title{Asymptotic behavior of a Bingham Flow in thin domains \\ with rough boundary}
\author{G. Cardone\thanks{Universit\`{a} del Sannio, Dipartimento di Ingegneria, Corso
Garibaldi, 107, 82100 Benevento, Italy; email: gcardone@unisannio.it},
C. Perugia\thanks{Universit\`{a} del Sannio, Dipartimento di Scienze e Tecnologie, Via F. De Sanctis, Palazzo ex-Enel, 82100 Benevento,
Italy; email: cperugia@unisannio.it}, M. Villanueva Pesqueira\thanks{ Universidad Pontificia Comillas, Grupo Din\'amica No Lineal, Departamento Matem\'{a}tica Aplicada. ICAI Alberto Aguilera 25, 28015, Madrid, Spain; email:
mvillanueva@comillas.edu.}}
\maketitle

\begin{abstract}
\medskip We consider an incompressible Bingham flow in a thin domain with rough boundary, under the action of given external forces and with no-slip boundary condition on the whole boundary of the domain. In mathematical terms, this problem is described by non linear variational inequalities over domains where a small parameter $\eps$  denotes the thickness of the domain and the roughness periodicity of the boundary.  By using an adapted linear unfolding operator we perform a detailed analysis of the asymptotic behavior of the Bingham flow when $\eps$ tends to zero. We  obtain the homogenized limit problem for the velocity and the pressure, which preserves the nonlinear character of the flow, and study the effects of the microstructure in the corresponding effective equations. Finally, we give the interpretation of the limit problem in terms of a non linear Darcy law.

\bigskip

{\bf Keywords}: Non-Newtonian fluids, thin domain, oscillating boundary, unfolding operators.

\medskip

{\bf AMS subject classifications}: 76A05, 76A20, 76D08, 76M50, 74K10, 35B27

\end{abstract}

\section{Introduction}\label{intro}

In this paper we study the steady flow of an incompressible Bingham fluid in a thin domain with a rough boundary. Mathematical models involving thin domains are widely used to describe situations appearing naturally in numerous industrial and engineering applications. A relevant example is the classical lubrication problem, describing the relative motion of two adjacent surfaces separated by a thin film of fluid acting as a lubricant. In the incompressible case, the main unknown is the pressure of the fluid. Once resolved the pressure, it is possible to compute other fundamental quantities, such as the velocity field and the forces on the bounding surfaces.

On the other hand, to increase the hydrodynamic performance in various lubricated machine elements, for example journal bearings and thrust bearings, engineers also point out the importance of analyzing how the surface irregularities affects the thin film flow. From a mathematical point of view, a thin domain with rough boundary is usually described by two parameters $\eps$ and $\eta_\eps$, different in general, which tend to zero. The first one, $\eta_\eps$, is the characteristic wavelength of the periodic roughness, and $\eps$ is the thickness of the domain, i.e. the distance between the surfaces. There are several papers studying the asymptotic behavior of fluids in thin domains with rough boundary in the case of Newtonian fluids, see for instance \cite{BC,FKT,FTP} and the references therein. However, for the non-Newtonian fluids, the situation is completely different. The main reason is that the viscosity is a nonlinear function of the symmetrized gradient of the velocity (see \cite{AS}).

In this paper, denoted  by $\eps$ the thickness of the domain and the roughness periodicity, we are interested in studying how the geometry of the thin domain with rough boundary affects the asymptotic behaviur of an incompressible Bingham fluid, when $\eps$ tends to zero. 
We refer the reader to the very recent paper \cite{RK} and the references therein for the application of our study to problems issued from the real life applications. Indeed, predicting lava flow pathways is important for understanding effusive eruptions and for volcanic hazard assessment. One particular challenge is understanding the interplay between flow pathways and substrate topography that is often rough on a variety of scales ($< 1$ m to $10$ s km).

The Bingham fluid is a non-Newtonian fluid which behaves as a rigid body at low stresses but flows as a viscous fluid at high stress. This type of non-Newtonian fluid behavior is characterized by the existence of a threshold stress, called yield stress, which must be exceeded for the fluid to deform or flow. Once the externally applied stress is greater than the yield stress, the fluid exhibits Newtonian behavior. Typical examples of such fluids are some paints, toothpaste, the mud which can be used for the oil extraction, the volcanic lava or even the blood.

 The physical description of the Bingham fluid was introduced in \cite{B}, while the mathematical model of the Bingham flow in a bounded domain was performed by G. Duvaut and J.L. Lions in \cite{DL}. Here, the existence of the velocity and the pressure for such a flow was proved in the case of a bi-dimensional and of a three-dimensional domain.

 There are several papers studying the asymptotic behavior of Bingham fluids in thin domains. In particular we can mention \cite{BK, BK04}, where the asymptotic behavior of a Bingham fluid in a thin layer of thickness $\eps$ is studied. In \cite{BGL}, the authors obtain and analyze the limit problem for a steady incompressible flow of a Bingham fluid in a thin T-like shape structure. Finally, in the recent paper \cite{AB}, a dimension reduction and the unfolding operator method was used to describe the asymptotic behavior of the flow of a Bingham fluid in thin porous media. We also refer the reader to \cite{BM,BuCar, BuCarPe, LS}  and \cite{BuDo}, where the asymptotic behavior in porous media of a Bingham fluid and a power law fluid, respectively, is performed using different techniques in homogenization.

 Our paper is based on the periodic unfolding method, see \cite{CiorDamGri02}, for the first descriptions of the method, \cite{CiDamG, Cio-Dam-Don-Gri-Zaki}  for a systematic treatment of this method, and \cite{ArrVi, BG},  for an adaptation of this method to thin domains with oscillating boundaries. We refer to \cite{CiDamG2} for a further detailed description of the method, this book presents both the theory as well as numerous examples of applications. Thanks to this method, we are able to capture the microscopic behavior of the fluid near the rough boundary. Indeed, the unfolding operator allows us to obtain the homogenized limit although to establish suitable estimates for the pressure we need to adapt the extension operator introduced in \cite{BC}, generalizing the fundamental results of Tartar for porous domains \cite{Tar}, to the case of Bingham fluids.  

We underline that, following an approach similar to the one used to get our limit problem, we can recover the convergence results given in \cite{BC}, in the case of Newtonian fluids, see also \cite{FKT,FTP} for a generalization to the nonstationary case.

Let us point out that despite the works mentioned above, the study of a Bingham flow in a thin domain with a rough boundary has not been previously treated in literature.
 
 The paper is organized as follows. 
 
In Section 2, we introduce our thin domain with rough top boundary $\Omega_\eps$, where the parameter $\eps$ represents either the thickness of the domain or the rough periodicity.
Then, we formulate the problem which models the flow in $\Omega_\eps$ of a viscoplastic incompressible Bingham fluid with velocity $u_\eps$ and pressure $p_\eps$, verifying the nonlinear variational inequality \eqref{vfluid}. Finally, we give some notations useful in the sequel. 
In Section 3, we give some a priori estimates for both the velocity and the pressure. In Section 4, we introduce definition and properties of the unfolding operator, adapted to thin domains with oscillating boundary, introduced in \cite{ArrVi} for the bidimensional case. Section 5 is devoted to state some convergence results for the unfolded velocity field, taking into account the \emph{a priori} estimates proved in Section 3, a suitable "rescaled" velocity field, which is typical for this kind of problem in thin domains, and the unfolding operator defined in Section 4. 
Section 6  is dedicated to the extension of the pressure which is obtained assuming some restrictions on the domain. This extension have an essential importance in our study in order to get convergence results for the unfolded pressure. 
In Section 7, we state and prove the main result of our paper, Theorem \ref{teolim}, which allow us to identify the limit problem. Finally, in Section 8, we conclude with the interpretation of this limit problem, which preserves the nonlinear character of the flow. Indeed,  in the case of forces independent of the vertical variable, both a nonlinear Darcy equation and a lower dimensional Bingham-like law arise (see Proposition \ref{limitthin}). 

\begin{figure}\label{fig1}
\begin{center}
\includegraphics[scale=0.55]{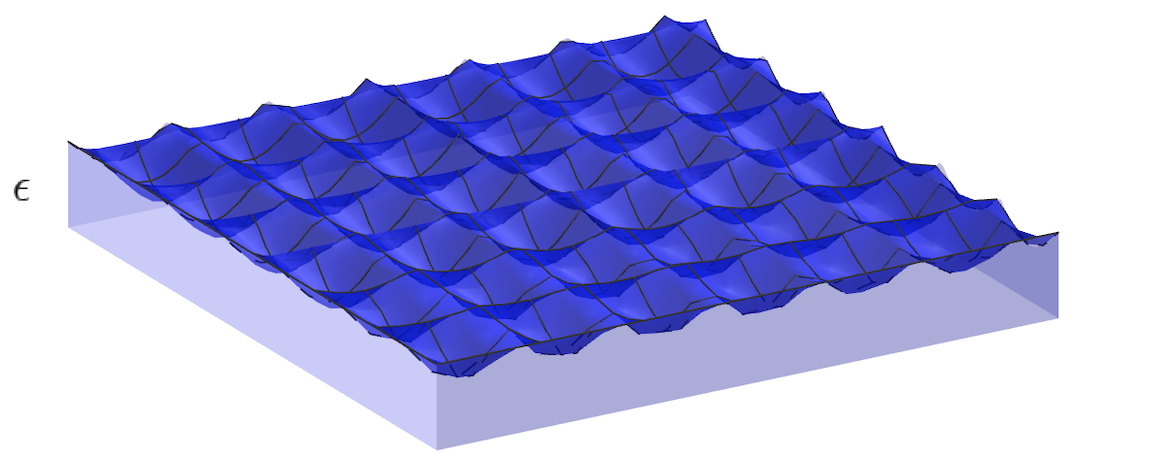}

\caption{Thin domain with oscillating periodic boundary}
\end{center}
\end{figure}

\section{The setting of the problem} \label{PRE}
Throughout the paper, we will consider three-dimensional thin domains with an oscillatory behavior in its top boundary, which are defined as follows (see Figure.1)
\begin{equation}\label{thin}
\Omega_\eps =  \Big\{ (x_1, x_2, x_3) \in \R^3 \; | \;  (x_1, x_2) \in \omega, \, 0<x_3<\eps G(x_1/\eps, x_2/\eps) \Big\},
\end{equation}
where $\omega=(0,1)^2$ denotes the unitary cell in $\R^2$, $\eps$ is a positive parameter tending to zero and $G: \R^2 \to \R$ is a smooth function, $Y$ - periodic, being  $Y=(0,L_1)\times(0,L_2)$ the periodicity cell, and   such that there exist two positive 
constants $G_0, G_1$ with $0< G_0\leq G(x_1,x_2)\leq G_1, \; \forall (x_1,x_2)\in \R^2$. 

In order to simplify the notation, we decompose each point $\bf x \in \R^3$ according to
$$ {\bf x}=(\hat{x},x_3), \hbox{ with } \hat{x}=(x_1,x_2) \in \R^2 \hbox{ and } x_3 \in \R.$$
Moreover, we also use the notation $\hat{\cdot}$ to denote a generic vector of $\R^2$.
Therefore, our thin domain is defined as follows
$$\Omega_\eps =  \Big\{ (\hat{x}, x_3) \in \R^3 \; | \;  \hat{x} \in \omega, \, 0<x_3<\eps G(\hat{x}/\eps) \Big\}.$$
The representative cell, which describes the thin structure, is given by
$$Y^*=\{y\equiv(\hat y, y_3) \in \R^3 \; | \;  \hat y \in Y,  \; 0 < y_3 < G(\hat y)\},$$ 
while its bottom and upper boundary will be denoted by $\partial Y^\ast_{inf}$ and $\partial Y^\ast_{sup}$, respectively.\\
Moreover, we define the domain with a fixed height $\Omega=\omega \times (0, G_1)$.

In $\Omega_\eps$, we consider the incompressible flow of a Bingham fluid, see \cite{B}, with viscosity and yield stress given by $\mu \eps^2$ and $g \eps$, respectively, where $\mu$ and $g$ are positive constants independent of $\eps$. The fluid velocity is denoted by $\bf{u}_\eps$, while the pressure of the fluid is denoted by $p_\eps$. Then, the stress tensor is defined by
\begin{equation}
\sigma_{ij}=-p_{\varepsilon
}\delta_{ij}+g\eps\dfrac{D_{ij}(\bf{u}_\eps)}{(D_{II}(\bf{u}_\eps%
))^{\frac{1}{2}}}+2\mu\eps^2 D_{ij}(\bf{u}_\eps),\label{sig}
\end{equation}
where $\delta_{ij}$ is the Kronecker symbol and $D_{ij}$ and $D_{II}$ are defined by
\begin{align*}
D_{ij}(\bf{u}_\eps) & =\frac{1}{2}\left(\frac{\partial u_{\eps,i}}{\partial x_{j}}+\frac{\partial u_{\eps,j}}{\partial
x_{i}}\right)  ,1\leq i,j\leq 3,\\
D_{II}(\bf{u}_\eps) &   =\dfrac{1}{2}%
{\displaystyle\sum\limits_{i,j=1}^{n}}
D_{ij}({\bf u}_\eps)D_{ij}(\bf{u}_\eps).
\end{align*}

\begin{remark}
Notice that we will denote vector fields in three dimensions using bold face, ${\bf u}_\eps=(u_{\eps,1},u_{\eps,2},u_{\eps,3})$. Moreover, the euclidean norm in $\R^3$ is denoted by $|\cdot|.$
\end{remark}

Relation (\ref{sig}) represents the constitutive law of the Bingham fluid. In \cite{DL}, it is shown that this constitutive law is equivalent to
the following one:%
\[
\left\{
\begin{array}
[c]{rcl}%
(\sigma_{II})^{\frac{1}{2}}<g\varepsilon & \Leftrightarrow & D_{ij}%
({\bf u}_{\varepsilon})=0\\
(\sigma_{II})^{\frac{1}{2}}\geq g\varepsilon & \Leftrightarrow &
D_{ij}({\bf u}_{\varepsilon})=\dfrac{1}{2\mu\eps^2}\left(  1-\dfrac{g\varepsilon}%
{(\sigma_{II}^{\varepsilon})^{\frac{1}{2}}}\right)  {\sigma}_{ij}^{\eps},
\end{array}
\right.
\]
where $\sigma_{II}$ and  $\sigma_{ij}^\eps$ are defined by
\begin{align*}
\sigma_{II}  &  =\dfrac{1}{2}%
{\displaystyle\sum\limits_{i,j=1}^{3}}
{\sigma}^{\eps}_{ij}{\sigma}_{ij}^{\eps},\\
\sigma_{ij}^\eps & = g \frac{D_{ij}}{(D_{II})^{\frac {1}{2}}} + 2 \mu \eps^2 D_{ij}.
\end{align*}

We assume that the fluid is incompressible, i. e. the velocity field is divergence free, 
and we impose the no-slip condition on the boundary of the domain, ${\bf u}_\eps=0$ on $\partial \Omega_\eps$. Therefore, the space of admissible velocity fields is given by
$$V_\eps= \Big\{ {\bf v} \in \big(H^1_0(\Omega_\eps)\big)^3 : \hbox{div}({\bf v})=0\Big\}.$$

Let us apply to the fluid an external body force ${\bf f}_\eps \in L^2(\Omega_\eps)^3$.\\

According to \cite{DL}, for any fixed $\epsilon$, the flow of our incompressible Bingham fluid is modeled by the following variational problem
\begin{equation}\label{vfluid}
\left\{
\begin{aligned}
\text{Find } {\bf u}_\eps \in V_\eps \text{ such that}\\
\mu \eps^2 \int_{\Omega_\eps} \nabla{\bf u}_\eps\cdot \nabla({\bf v}-{\bf u}_\eps)\, d{\bf x} &+ g\eps \int_{\Omega_\eps}|\nabla{\bf v}|\, d{\bf x} -g\eps \int_{\Omega_\eps}|\nabla{\bf u}_\eps|\, d{\bf x}\\
&\geqslant \int_{\Omega_\eps} {\bf f}_\eps \cdot ({\bf v}-{\bf u}_\eps)\, d{\bf x}, \; \forall {\bf v} \in V_\eps,
\end{aligned}
\right.
\end{equation}
which admits a unique solution in $V_\eps$.

Equivalently, see  \cite{BM, DL}, for any fixed $\epsilon$, denoted by  $p_\eps$ the pressure of the fluid in $\Omega_\epsilon$, there exists a unique couple $({\bf u}_\eps, p_\eps)\in V_\eps \times L^2_0(\Omega_\epsilon)$, satisfying the following variational inequality
\begin{equation}\label{vpfluid}
\begin{aligned}
\mu \eps^2 \int_{\Omega_\eps} \nabla{\bf u}_\eps\cdot \nabla({\bf v}-{\bf u}_\eps)\, d{\bf x} &+ g\eps \int_{\Omega_\eps}|\nabla{\bf v}|\, d{\bf x} -g\eps \int_{\Omega_\eps}|\nabla{\bf u}_\eps|\, d{\bf x}\\
&\geqslant \int_{\Omega_\eps} {\bf f}_\eps ({\bf v}-{\bf u}_\eps)\, d{\bf x} +  \int_{\Omega_\eps} p_\eps \hbox{div} ({\bf v}-{\bf u}_\eps)\, d{\bf x}, \; \forall {\bf v} \in H^1_0(\Omega_\eps)^3,
\end{aligned}
\end{equation}
where $L^2_0(\Omega_\epsilon)$ denotes the space of functions in $L^2(\Omega_\epsilon)$ with zero mean value.
\begin{remark}\label{rescaled norm}
Due to the order of the height of the thin domain, it makes sense to
 consider the following rescaled Lebesgue measure
$$\rho_{\eps}(\mathcal{O}) = \frac{1}{\eps} \mu(\mathcal{O}), \; \forall\, \mathcal{O} \subset \Omega_\eps,$$
which is widely considered in works involving thin domains, see e.g. \cite{HaRau,PerSil13,PriRi,Rau}.

As a matter of fact, from now on,  we use
the following rescaled norms in the thin open sets 
%
\begin{align*}
|||\varphi|||_{L^p(\Omega_\eps)} &= \eps^{-1/p}||\varphi||_{L^p(\Omega_\eps)},  \quad \forall \varphi \in L^p(\Omega_\eps), \quad 1\leq p < \infty,\\
|||\varphi|||_{W^{1,p}( \Omega_\eps)} &= \eps^{-1/p}||\varphi||_{W^{1,p}( \Omega_\eps)}, \quad \forall \varphi \in W^{1,p}(\Omega_\eps), \quad 1\leq p < \infty.
\end{align*}
For completeness, we consider $|||\varphi|||_{L^\infty(\Omega_\eps)} =||\varphi||_{L^\infty(\Omega_\eps)}$ and we denote by $H^{-1}(\Omega_\eps)$ the dual space to $H^1_0( \Omega_\eps)$ endowed with the rescaled norm.

\end{remark}
\begin{remark}\label{body force}
Since the thin domain shrinks in the vertical direction as $\eps$ tends to zero, it is usual to assume that the applied forces do not depend on $\eps$ and they are of the form
$${\bf f}(x)=(\hat f(\hat x), 0), \hbox{ a.e. } x \in \Omega_\eps.$$
Notice that the third component is neglected and the force is independent of the vertical direction. Moreover, this particular ${\bf f}$ satisfies
$$\eps^{-1/2}|| {\bf f} ||_{L^2(\Omega_\epsilon)^3}\leq C || {\hat f} ||_{L^2(\omega)^2}\leq C.$$
\end{remark}
Throughout the paper, we suppose 
\begin{equation}\label{Hf}
||| {\bf f}_\eps |||_{L^2(\Omega_\epsilon)^3} \leq C,
\end{equation} 
for some positive constant C independent of $\eps$.\\
This assumption on the applied forces is usual in order to obtain appropriate estimates. In fact, as already observed in Remark \ref{body force}, the common choice of the applied forces ${\bf f}_\eps$ in thin domains, where the forces do not depend on the vertical variable and the vertical component of the forces is neglected, satisfies this assumption.

\section{\emph {A priori} estimates}\label{APEst}

In this section, we follow the standard procedure to get the  \emph {a priori} estimates for the velocity $\bf{u}_\eps$ and  the pressure $p_\eps$.

First, notice that, the Poincar\'e inequality in the thin domain \eqref{thin} can be written as
\begin{equation}\label{Pi}
||| \boldsymbol{\varphi}|||_{L^2(\Omega_\eps)^{3}}\leq \eps C|||\nabla\boldsymbol{\varphi}|||_{L^2(\Omega_\eps)^{3\times3}}, \quad \forall \boldsymbol{\varphi} \in H^1_0(\Omega_\eps)^3,
\end{equation}
where $C$ is independent of $\boldsymbol{\varphi}$ and $\eps$.

\begin{lemma}
For any fixed $\eps$, let $({\bf u}_\eps, p_\eps)$ be the solution of \eqref{vpfluid}. Under the assumption \eqref{Hf}, the following estimates hold
\begin{align}
&\eps|||\nabla {\bf u}_\eps|||_{L^2(\Omega_\eps)^{3\times3}}\leq C,\label{v}\\
 & ||| {\bf u}_\eps|||_{L^2(\Omega_\eps)^{3}}\leq C,\label{gradv} \\
 & |||\nabla p_\eps|||_{H^{-1}(\Omega_\eps)^{3}}\leq \eps C,\label{p}
\end{align}
with $C$ a positive constant independent of $\eps$.
\end{lemma}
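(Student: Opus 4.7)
The plan is to derive \eqref{v} and \eqref{gradv} by a single energy-type test in the variational inequality, and then use the standard ``linear-combination of test functions'' trick to bound the pressure by a duality argument.

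\textbf{Step 1 (velocity estimates).} I would take ${\bf v}=0$ as test in \eqref{vfluid} (which is admissible since $0\in V_\eps$). The yield-stress term corresponding to ${\bf v}=0$ vanishes, and rearranging gives
\[
\mu\eps^{2}\int_{\Omega_\eps}|\nabla{\bf u}_\eps|^{2}\,d{\bf x}+g\eps\int_{\Omega_\eps}|\nabla{\bf u}_\eps|\,d{\bf x}\leq\int_{\Omega_\eps}{\bf f}_\eps\cdot{\bf u}_\eps\,d{\bf x}.
\]
Drop the nonnegative $L^{1}$ term, apply Cauchy--Schwarz on the right, and invoke the Poincar\'e inequality \eqref{Pi}. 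In rescaled norms this chain reads
\[
\mu\eps^{3}|||\nabla{\bf u}_\eps|||_{L^{2}}^{2}\leq\eps|||{\bf f}_\eps|||_{L^{2}}|||{\bf u}_\eps|||_{L^{2}}\leq C\eps^{2}\,|||{\bf f}_\eps|||_{L^{2}}\,|||\nabla{\bf u}_\eps|||_{L^{2}},
\]
using $\|\varphi\|_{L^{2}}=\eps^{1/2}|||\varphi|||_{L^{2}}$. Dividing by $|||\nabla{\bf u}_\eps|||_{L^{2}}$ and using \eqref{Hf} yields \eqref{v}; applying \eqref{Pi} once more gives \eqref{gradv}.

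\textbf{Step 2 (pressure estimate).} Here I would use the formulation \eqref{vpfluid}, whose test space is the full $H^1_0(\Omega_\eps)^3$. Given $\boldsymbol{\psi}\in H^1_0(\Omega_\eps)^3$, plug both ${\bf v}={\bf u}_\eps+\boldsymbol{\psi}$ and ${\bf v}={\bf u}_\eps-\boldsymbol{\psi}$ into \eqref{vpfluid}. Using the triangle-inequality bound
\[
\bigl||\nabla({\bf u}_\eps\pm\boldsymbol{\psi})|-|\nabla{\bf u}_\eps|\bigr|\leq|\nabla\boldsymbol{\psi}|,
\]
which is the standard device that circumvents the non-differentiability of the yield-stress functional, one obtains
\[
\left|\int_{\Omega_\eps}p_\eps\,\mathrm{div}\,\boldsymbol{\psi}\,d{\bf x}\right|\leq\mu\eps^{2}\!\!\int_{\Omega_\eps}\!|\nabla{\bf u}_\eps||\nabla\boldsymbol{\psi}|\,d{\bf x}+g\eps\!\!\int_{\Omega_\eps}\!|\nabla\boldsymbol{\psi}|\,d{\bf x}+\left|\int_{\Omega_\eps}{\bf f}_\eps\cdot\boldsymbol{\psi}\,d{\bf x}\right|.
\]

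\textbf{Step 3 (control of the three terms).} Assume $|||\boldsymbol{\psi}|||_{W^{1,2}(\Omega_\eps)}\leq 1$, i.e.\ $\|\nabla\boldsymbol{\psi}\|_{L^{2}}\leq C\eps^{1/2}$. Using \eqref{v} (which gives $\|\nabla{\bf u}_\eps\|_{L^{2}}\leq C\eps^{-1/2}$), Cauchy--Schwarz, $|\Omega_\eps|=O(\eps)$, and the Poincar\'e inequality \eqref{Pi} on $\boldsymbol{\psi}$, each of the three terms is estimated by an appropriate power of $\eps$ times $|||\nabla\boldsymbol{\psi}|||_{L^{2}}$. Taking the supremum over $\boldsymbol{\psi}$ with unit rescaled $H^1$-norm yields the bound \eqref{p}, recognizing that $\int p_\eps\,\mathrm{div}\,\boldsymbol{\psi}=-\langle\nabla p_\eps,\boldsymbol{\psi}\rangle_{H^{-1},H^1_0}$.

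\textbf{Main obstacle.} The delicate point is handling the non-differentiable yield-stress functional in \eqref{vpfluid}. Because \eqref{vpfluid} is an inequality rather than an equation, one cannot simply differentiate or subtract test-function identities to isolate $\int p_\eps\,\mathrm{div}\,\boldsymbol{\psi}$; the $\pm\boldsymbol{\psi}$ trick combined with the triangle inequality is essential. A secondary care is the bookkeeping of the scaling factors between the standard $L^{p}$ norms and the rescaled ones $|||\cdot|||$, and in particular verifying that the constants in \eqref{Pi} remain independent of $\eps$, which follows from the fact that the constant in the standard Poincar\'e inequality on $\Omega_\eps$ scales like $\eps$ thanks to the vanishing-thickness in the $x_{3}$-direction and the no-slip condition on the bottom/top of $\Omega_\eps$.
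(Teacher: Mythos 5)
Your proposal is correct and follows essentially the same path as the paper. In Step 1 you use only ${\bf v}=0$ to obtain the energy inequality, while the paper additionally takes ${\bf v}=2{\bf u}_\eps$ to upgrade this to an equality --- but for \eqref{v}--\eqref{gradv} only the inequality is needed, so your shortcut is fine; in Steps 2--3, the $\pm\boldsymbol{\psi}$ substitution together with the triangle bound $\bigl||\nabla({\bf u}_\eps\pm\boldsymbol{\psi})|-|\nabla{\bf u}_\eps|\bigr|\leq|\nabla\boldsymbol{\psi}|$ is exactly the paper's (implicit) mechanism for handling the yield-stress functional, and your scaling bookkeeping (each term of order $\eps^{2}$ before dividing by $\eps$, with the rescaled duality pairing giving \eqref{p}) matches the paper's computation.
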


\begin{proof}
Taking ${\bf v}=0$ and ${\bf v}=2\bf{u}_\eps$ as a test function in \eqref{vpfluid} we get
\begin{equation*}
\begin{aligned}
-\mu \eps^2 \int_{\Omega_\eps} \nabla{\bf u}_\eps \cdot \nabla{\bf u}_\eps\, d{\bf x} &-g\eps \int_{\Omega_\eps}|\nabla{\bf u}_\eps|\, d{\bf x}\geqslant - \int_{\Omega_\eps} {\bf f}_\eps {\bf u}_\eps\, d{\bf x},\\
\mu \eps^2 \int_{\Omega_\eps} \nabla{\bf u}_\eps \cdot \nabla{\bf u}_\eps\, d{\bf x} &+g\eps \int_{\Omega_\eps}|\nabla{\bf u}_\eps|\, d{\bf x}\geqslant  \int_{\Omega_\eps} {\bf f}_\eps {\bf u}_\eps\, d{\bf x}.\\
\end{aligned}
\end{equation*}
Consequently, we obtain
$$\mu \eps^2 \int_{\Omega_\eps} \nabla{\bf u}_\eps \cdot \nabla{\bf u}_\eps\, d{\bf x} +g\eps \int_{\Omega_\eps}|\nabla{\bf u}_\eps|\, d{\bf x}= \int_{\Omega_\eps} {\bf f}_\eps {\bf u}_\eps\, d{\bf x}.$$ 
By using H\"{o}lder's inequality on the right hand side and the assumption \eqref{Hf}, we have
$$\mu \eps^2  |||\nabla{\bf u}_\eps|||^2_{L^2(\Omega_\eps)^{3\times3}}\leq ||| {\bf f}_\eps|||_{L^2(\Omega_\epsilon)^3} ||| {\bf u}_\eps|||_{L^2(\Omega_\eps)^{3}}\leq C ||| {\bf u}_\eps|||_{L^2(\Omega_\eps)^{3}},$$
with $C$ a positive constant independent of $\eps$.

Then, applying the  Poincar\'e inequality \eqref{Pi}, we obtain

$$\mu \eps^2  |||\nabla{\bf u}_\eps|||^2_{L^2(\Omega_\eps)^{3\times3}}\leq \eps C |||\nabla{\bf u}_\eps|||_{L^2(\Omega_\eps)^{3\times3}}.$$  
Therefore, from this last inequality and by \eqref{Pi} again, we get estimates \eqref{v} and \eqref{gradv}.

Finally, we are going to obtain the \emph{a priori} estimate for the pressure. To this aim, let ${\bf v_\eps} \in \big(H^1_0(\Omega_\eps)\big)^3$. Then, taking ${\bf v}={\bf v}_\eps+{\bf u}_\eps$ as a test function in \eqref{vpfluid}, we get
\begin{equation*}
\begin{aligned}
\mu \eps^2 \int_{\Omega_\eps} \nabla{\bf u}_\eps\cdot \nabla{\bf v_\eps}\, d{\bf x} &+ g\eps \int_{\Omega_\eps}|\nabla{\bf v_\eps}+\nabla{\bf u_\eps} |\, d{\bf x} -g\eps \int_{\Omega_\eps}|\nabla{\bf u}_\eps|\, d{\bf x}\\
&\geqslant \int_{\Omega_\eps} {\bf f}_\eps \cdot {\bf v_\eps}\, d{\bf x} +  \int_{\Omega_\eps} p_\eps \hbox{div}({\bf v}_\eps)\, d{\bf x}, \; \forall {\bf v}_\eps \in H^1_0(\Omega_\eps)^3.
\end{aligned}
\end{equation*}
Hence, by Holder's inequality, it follows that
$$\frac{1}{\eps} \int_{\Omega_\eps} p_\eps \hbox{div}({\bf v}_\eps)d{\bf x}\leq \mu \eps^2 |||\nabla{\bf u}_\eps|||_{L^2(\Omega_\eps)^{3\times3}}|||\nabla{\bf v}_\eps|||_{L^2(\Omega_\eps)^{3\times3}} + g\eps|||\nabla{\bf v}_\eps|||_{L^2(\Omega_\eps)^{3\times3}}+ ||| {\bf f}_\eps |||_{L^2(\Omega_\epsilon)^3} ||| {\bf v}_\eps|||_{L^2(\Omega_\eps)^{3}}.$$
Consequently, by using \eqref{Pi} and estimates \eqref{Hf} and  \eqref{v}, we get
$$\frac{1}{\eps} \int_{\Omega_\eps} p_\eps \hbox{div}({\bf v}_\eps)d{\bf x}\leq C\eps|||\nabla{\bf v}_\eps|||_{L^2(\Omega_\eps)^{3\times3}}, \quad  \forall {\bf v}_\eps \in H^1_0(\Omega_\eps)^3,$$
which provides estimate \eqref{p}.
\end{proof}

\section{The unfolding operator}\label{UO}
In this section, we extend, to three dimensional thin domains with an oscillatory boundary, the definition of the unfolding operator, which was introduced in \cite{ArrVi} in the two dimensional case. Moreover, we present  some of the main properties of the unfolding operator which we will need in order to obtain the homogenized limit problem. 

We will use similar notations as in \cite{ArrVi}  :
\begin{itemize}
\item $N_\eps$ denotes the largest integer such that $\eps L_1(N_\eps+1)\leqslant1$,
\item $M_\eps$ denotes the largest integer such that $\eps L_2(M_\eps+1)\leqslant1$,
\item $\omega_{ij}^\eps= (i\eps L_1, (i+1)\eps L_1) \times  (j\eps L_2, (j+1)\eps L_2)$ with $i=0,1,\cdots,N_\eps,\,j=0,1,\cdots,M_\eps$,
\item $\omega^\eps= \hbox{Int}\left\{\displaystyle \bigcup_{i=0}^{N_\eps}\displaystyle \bigcup_{j=0}^{M_\eps} \overline{ \omega_{ij}^\eps}\right\}$, $\overline{ \omega_{ij}^\eps}$ denotes the closure of the open set $\omega_{ij}^\eps$,
\item $\Lambda^\eps= \omega \setminus \omega^\eps$ or equivalently, $\Lambda^\eps = \big([\eps L_1(N_\eps +1), 1)\times (0,1)\big) \cup \big((0,1) \times [\eps L_2(M_\eps +1), 1)\big)$,
\item $\Omega^0_\eps$ denotes the set which contains all the cells totally included in $\Omega_\eps$ 
$$\Omega^0_\eps = \Big\{ ( \hat x, x_3) \in \R^3 \; | \;  \hat x \in \omega^\eps,  \; 0 < x_3 < \epsilon \, G(\hat x/\eps) \Big\},$$
\item $\Omega^1_\eps=\Omega_\eps \setminus \Omega^0_\eps.$\\

\item By analogy with the definition of the integer and fractional part of a real number, for $\hat x\in \R^2$, $[\hat x]_{L}$ denotes the unique pair of integers, $[\hat x]_{L}=(k_1,k_2)\in \Z^2$, such that $\hat x \in  \big[ k_1 L_1, (k_1 + 1) L_1\big) \times \big[ k_2 L_2, (k_2 + 1) L_2\big)$ and $\{\hat x\}_L \in [0, L_1)\times [0,L_2)$ is such that $\hat x = [\hat x]_{L}L + \{\hat x\}_L$. 
Then, if $L$ denotes the pair $(L_1,L_2)$, for each $\eps>0$ and for every $\hat x\in \R^2$, there exists a unique pair of integers, $\Big[\frac{\hat x}{\eps}\Big]_L$, such that 
\begin{equation}\label{decomposition}
\hat x = \eps\Big[\frac{\hat x}{\eps}\Big]_LL +  \eps\Big\{\frac{\hat x}{\eps}\Big\}_L, \quad  \Big\{\frac{\hat x}{\eps}\Big\}_L \in [0, L_1)\times [0,L_2).
\end{equation}
\end{itemize}

We are now in position to define the unfolding operator in our setting.

\begin{definition}\label{unfold def}
Let $\varphi$ be a Lebesgue-measurable function defined in $\Omega_\eps$. The unfolding operator $\mathcal{T_\eps}$, acting on $\varphi$, is defined as the following function in $\omega \times Y^*$
\begin{eqnarray*}
\mathcal{T}_\eps(\varphi) (\hat x, \bf{y})
= 
 \left\{ 
\begin{array}{ll}
\varphi \Big( \eps \Big[\frac{\hat x}{\eps}\Big]_{L}L + \eps \hat y,\eps y_3\Big) &\hbox{for} \quad (\hat x , \hat y, y_3) \in \omega^\eps \times Y^*, \\
0  &\hbox{for} \quad  (\hat x , \hat y, y_3) \in  \Lambda^\eps \times Y^*.
\end{array}
\right.
\end{eqnarray*}
\end{definition}

In the following proposition, we list the main properties of the unfolding operator previously defined. 
\begin{proposition}\label{properties}
The unfolding operator $\mathcal{T_\eps}$ has the following properties:
\begin{itemize}
\item[i)] $ \mathcal{T_\eps}$ is a linear operator.
\item[ii)] $\mathcal{T}_\eps(\varphi \psi)=\mathcal{T_\eps(\varphi)} \mathcal{T_\eps(\psi)}$ $\, \forall \, \varphi, \psi$ Lebesgue-measurable functions in $\Omega_\eps$.
\item[iii)] Let $\varphi \in L^1(\Omega_\eps).$ The following integral equality holds
\begin{align*}
& \frac{1}{L_1L_2}\int_{ \omega \times Y^*} \mathcal{T_\eps(\varphi)} (\hat x, {\bf y})\, d{\hat x} d{\bf y} = \frac{1}{\eps}\int_{\Omega^0_\eps} \varphi ({\bf x})\, d{\bf x}\\
&= \frac{1}{\eps}\int_{\Omega_\eps} \varphi ({\bf x}) \,d{\bf x} - \frac{1}{\eps}\int_{\Omega^1_\eps} \varphi ({\bf x})\, d{\bf x}.
\end{align*}
\item[iv)] For every $\varphi \in L^p(\Omega_\eps)$, we have $\mathcal{T_\eps(\varphi)} \in L^p\big( \omega \times Y^*\big)$, with $1\leq p \leq \infty$. In addition, the following relationship exists between their norms:
$$\|\mathcal{T_\eps(\varphi)}\|_{L^p\big( \omega \times Y^*\big)} = (L_1L_2)^{\frac{1}{p}}  \, |||\varphi|||_{L^p(\Omega^0_\eps)} \leq (L_1L_2)^{\frac{1}{p}}  \, |||\varphi|||_{L^p(\Omega_\eps)}.$$ 
In the special case $p=\infty$, 
$\|\mathcal{T_\eps(\varphi)}\|_{L^\infty \big( \omega\times Y^*\big)}= \|\varphi\|_{L^\infty(\Omega_\eps^0)} \leq \|\varphi\|_{L^\infty(\Omega^\eps)}.$
\item[v)] For every $\varphi \in W^{1,p}(\Omega_\eps)$, $1\leq p \leq \infty$, one has
\begin{equation}\label{derivative}
 \frac{\partial}{\partial y_i}\mathcal{T_\eps(\varphi)} = \eps \mathcal{T_\eps}\Big(\frac{\partial \varphi}{\partial x_i}\Big), \quad  \hbox{ for } i=1,2,3.
 \end{equation}
\item[vi)] Let $\varphi$ be a measurable function on $Y^*$, extended by $Y-$periodicity in the first two variables. Then $\varphi^\eps(\hat{x},x_3) = \varphi (\frac{\hat{x}}{\eps},\frac{x_3}{\eps})$ is 
a measurable function on $\Omega^\eps$, such that
$$\mathcal{T}_\eps(\varphi^\eps)(\hat x,{\bf y}) = \varphi({\bf y}), \quad \forall (\hat x, {\bf y}) \in \omega^\eps \times Y^*.$$
Furthermore, if $\varphi \in L^p(Y^*)$, with $1\leq p \leq \infty$ then  $\varphi^\eps \in L^p(\Omega_\eps)$.
\item[vii)] Let $\{\varphi^\eps\}$ be a sequence of functions in $L^p(\omega)$, $1\leq p < \infty$, such that
 $$ \varphi^\eps \eto \varphi \quad \hbox{strongly in } L^p(w).$$
 Then
 $$\mathcal{T_\eps(\varphi^\eps)} \eto  \varphi \quad \hbox{strongly in } L^p\big( \omega \times Y^*\big).$$
\end{itemize}
\end{proposition}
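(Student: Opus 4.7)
I would work through the seven items in order, dispatching those that follow by inspection and concentrating on the two substantive properties, (iii) and (vii). Properties (i), (ii), (v), (vi) come directly from the pointwise formula for $\mathcal{T}_\eps$: linearity and multiplicativity are immediate; for (v), on each cell $\omega_{ij}^\eps$ the shift $\eps[\hat x/\eps]_L L$ is constant in $\hat x$, so the entire $\mathbf{y}$-dependence of $\mathcal{T}_\eps(\varphi)(\hat x, \mathbf{y})=\varphi(\eps[\hat x/\eps]_L L+\eps\hat y,\,\eps y_3)$ enters only through $\eps\hat y$ and $\eps y_3$, and the chain rule produces the factor $\eps$; for (vi), $Y$-periodicity of $\varphi$ in its first two arguments absorbs the integer-part shift and leaves $\varphi(\mathbf{y})$.

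The key step is (iii). Since $\Omega_\eps^0$ decomposes as the disjoint union of the cells above each $\omega_{ij}^\eps$ (with $0\le i\le N_\eps$, $0\le j\le M_\eps$) and $\mathcal{T}_\eps(\varphi)$ is independent of $\hat x$ on $\omega_{ij}^\eps\times Y^*$, the $\hat x$-integration contributes the factor $|\omega_{ij}^\eps|=\eps^2 L_1 L_2$, while the change of variables $\mathbf{x}=(\eps iL_1+\eps y_1,\,\eps jL_2+\eps y_2,\,\eps y_3)$, whose Jacobian is $\eps^3$, maps $Y^*$ bijectively onto the cell of $\Omega_\eps^0$ above $\omega_{ij}^\eps$. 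Summing over $(i,j)$ yields the first equality of (iii); the second is just the splitting $\Omega_\eps=\Omega_\eps^0\cup\Omega_\eps^1$.

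Property (iv) then follows by applying (iii) to $|\varphi|^p$, using the identity $|\mathcal{T}_\eps(\varphi)|^p=\mathcal{T}_\eps(|\varphi|^p)$ inherited from the pointwise nature of $\mathcal{T}_\eps$; the $L^\infty$ statement is direct since $\mathcal{T}_\eps$ only relabels values already attained by $\varphi$ on $\Omega_\eps^0$. For (vii) I would split $\mathcal{T}_\eps(\varphi^\eps)-\varphi=\mathcal{T}_\eps(\varphi^\eps-\varphi)+(\mathcal{T}_\eps(\varphi)-\varphi)$: the first summand is bounded in $L^p(\omega\times Y^*)$ by $C\|\varphi^\eps-\varphi\|_{L^p(\omega)}\to 0$ via (iv), and for the second I would approximate $\varphi$ in $L^p(\omega)$ by $\psi\in C^0_c(\omega)$, observing that $\mathcal{T}_\eps(\psi)\to\psi$ uniformly on compact subsets of $\omega$ (because the argument $\hat x\mapsto\eps[\hat x/\eps]_L L+\eps\hat y$ differs from $\hat x$ by $O(\eps)$ and $\psi$ is uniformly continuous), the zero contribution on $\Lambda^\eps\times Y^*$ being negligible since $|\Lambda^\eps|\to 0$, while the residual $\mathcal{T}_\eps(\varphi-\psi)$ is again controlled by (iv). I expect (iii) to be the main obstacle, not because it is deep, but because the change-of-variables bookkeeping is the only place where the constants $\eps^2 L_1 L_2$ and the Jacobian $\eps^3$ must be tracked carefully; once it is in hand, (iv) and (vii) follow by routine manipulation.
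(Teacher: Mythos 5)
The paper does not actually prove this proposition: it only remarks that the properties ``follow directly from the properties proved in [ArrVi] for the bidimensional case,'' so there is no in-text argument to compare yours against. Your sketch is correct and supplies precisely the standard unfolding-operator proofs: the cell-by-cell change of variables in (iii), where the factor $|\omega_{ij}^\eps|=\eps^2L_1L_2$ from the $\hat x$-integration combines with the Jacobian $\eps^3$ and the $Y$-periodicity of $G$ (which sends each microcell onto $Y^*$) to produce the prefactor $L_1L_2/\eps$; the pointwise identity $|\mathcal{T}_\eps\varphi|^p=\mathcal{T}_\eps(|\varphi|^p)$ turning (iii) into (iv); the chain-rule factor $\eps$ for (v); periodicity absorbing the integer shift in (vi); and the density/$L^p$-stability argument for (vii), where the $C^0_c$ approximation is handled by uniform continuity and the vanishing of $\mathcal{T}_\eps$ on $\Lambda^\eps\times Y^*$ contributes a term controlled by $|\Lambda^\eps|\to 0$. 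All constants and measures check out, and this is the same line of argument as in the cited two-dimensional reference.
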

\begin{remark}
The proofs of these properties are omitted since they follow directly from the properties proved in \cite{ArrVi} for the bidimensional case. 
Notice that, in view of property iii) in Proposition \ref{properties}, we may say that the unfolding operator ``almost preserves'' the integral of the functions, since the ``integration defect'' arises only from the cells which are not completely included in $\Omega_\eps$ and it is controlled by the integral on $\Omega_\eps^1$.
\end{remark}


For every vector field ${\bf v} \in H^1(\Omega_\eps)^3$ the unfolding operator is naturally defined as follows:
\begin{equation}\label{Tvect}
\mathcal{T_\eps}({\bf v})=(\mathcal{T_\eps}({v_1}),\mathcal{T_\eps}({v_2}),\mathcal{T_\eps}({v_3})).
\end{equation}
Therefore, using basic properties of the unfolding operator, we prove the following proposition.
\begin{proposition}\label{propgradv}
 For every ${\bf v} \in H^1(\Omega_\eps)^3$ we have 
\begin{equation}\label{nonlinear}
\eps \mathcal{T_\eps}(|\nabla{\bf v}|)=|\nabla_{\bf y} \mathcal{T_\eps}({\bf v})|.
\end{equation}
\end{proposition}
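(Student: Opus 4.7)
The plan is to reduce the claim to a direct calculation using properties (i), (ii) and especially (v) of Proposition \ref{properties}, applied componentwise to the vector field ${\bf v}$ via the definition \eqref{Tvect}.

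First I would write out the squared Jacobian norm of the unfolded velocity in the $\bf y$ variable:
\begin{equation*}
|\nabla_{\bf y} \mathcal{T}_\eps({\bf v})|^2 = \sum_{i,j=1}^{3} \left|\frac{\partial}{\partial y_i} \mathcal{T}_\eps(v_j)\right|^2,
\end{equation*}
and then apply property (v) to each component, which yields $\frac{\partial}{\partial y_i} \mathcal{T}_\eps(v_j) = \eps\, \mathcal{T}_\eps\bigl(\frac{\partial v_j}{\partial x_i}\bigr)$. This pulls out a factor of $\eps^2$ and leaves a sum of squares of the form $\mathcal{T}_\eps(\frac{\partial v_j}{\partial x_i})^2$.

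Next I would use the multiplicative property (ii), namely $\mathcal{T}_\eps(\varphi)\mathcal{T}_\eps(\psi) = \mathcal{T}_\eps(\varphi\psi)$, with $\varphi = \psi = \frac{\partial v_j}{\partial x_i}$, to rewrite each summand as $\mathcal{T}_\eps\bigl((\frac{\partial v_j}{\partial x_i})^2\bigr)$. Linearity (property (i)) then lets me commute the finite sum inside the unfolding operator, giving $\eps^2 \mathcal{T}_\eps\bigl(\sum_{i,j} (\frac{\partial v_j}{\partial x_i})^2\bigr) = \eps^2 \mathcal{T}_\eps(|\nabla {\bf v}|^2)$. A final appeal to (ii) with $\varphi = \psi = |\nabla {\bf v}|$ identifies this with $\eps^2\bigl(\mathcal{T}_\eps(|\nabla {\bf v}|)\bigr)^2$; taking the nonnegative square root on both sides yields \eqref{nonlinear}.

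There is no real obstacle here: the statement is essentially a bookkeeping exercise, and the only subtle point is that the identities must be understood almost everywhere, with both sides vanishing on $\Lambda^\eps \times Y^*$ by definition of $\mathcal{T}_\eps$. The three-dimensional adaptation uses exactly the same chain of identities as in the bidimensional case treated in \cite{ArrVi}, so no new idea is required beyond the properties already collected in Proposition \ref{properties}.
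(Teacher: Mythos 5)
Your argument is correct and is essentially the paper's proof read in reverse: the paper starts from $\bigl[\mathcal{T}_\eps(|\nabla {\bf v}|)\bigr]^2$ and applies properties (ii), (i), (v) in that order to reach $\eps^{-2}|\nabla_{\bf y}\mathcal{T}_\eps({\bf v})|^2$, whereas you begin at the other end, but the chain of identities is the same. No meaningful difference in method or content.
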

{\bf Proof.} By ii) and v) of Proposition \ref{properties} and by \eqref{Tvect}, we get
\begin{align*}
\bigl[T_{\varepsilon}(|\nabla {\bf v}|)\bigr]^{2} &  =T_{\varepsilon}(|\nabla {\bf v}|^{2})=T_{\varepsilon
}\Bigl(\sum_{i,j=1}^{3}\Bigl (\frac{\partial v_{i}}{\partial x_{j}}%
\Bigr)^{2}\Bigr)=\sum_{i,j=1}^{3}\Bigl(T_{\varepsilon}\Bigl(\frac{\partial v_{i}}{\partial
x_{j}}\Bigr)\Bigr)^{2}\\
&  =\sum_{i,j=1}^{3}\Bigl(\frac{1}{\varepsilon}%
\frac{\partial}{\partial y_{j}}T_{\varepsilon}(v_{i})\Bigr)^{2}=\frac
{1}{\varepsilon^{2}}|\nabla_{\bf y}T_{\varepsilon}({\bf v})|^{2},
\end{align*}
which is \eqref{nonlinear}.
$\blacksquare$

\section{Some convergence results for the velocity}\label{ConvV}

In this section, we state some weak convergences for the velocity field, taking into account the \emph{a priori} estimates \eqref{v} and \eqref{gradv}.

In order to analyze the asymptotic behavior of the velocity field, we first perform a simple and typical change of variables in thin domains, which consists
in stretching in the $x_3$-direction by a factor $1/\eps$, i. e. $y_3=x_3/\eps$. Then, the thin domain $\Omega_\eps$ is transformed into the domain
$$\tilde \Omega_\eps=\Big\{ (\hat{x}, y_3) \in \R^3 \; | \;  \hat{x} \in \omega, \, 0<y_3< G(\hat{x}/\eps) \Big\}.$$
Notice that the rescaled domain $\tilde \Omega_\eps$ is not thin anymore, although it still presents an oscillatory behavior on the upper boundary. 

Then, we introduce the rescaled velocity field through the following notations:
\begin{align*}
& {\bf U}_\eps (\hat x, y_3)= {\bf u}_\eps(\hat x, \eps y_3), \; \hbox{ a.e. } (\hat x, y_3) \in \tilde \Omega_\eps,\\
&\big(\nabla_\eps  {\bf U}_\eps \big)_{i,j}= \partial_{x_j}  {U }_\eps^i, \;  \big(\nabla_\eps {\bf U}_\eps \big)_{i,3}= \frac{1}{\eps}\partial_{y_3}  {U }_\eps^i, \hbox{ for } i=1,2,3, \, j=1,2,\\
&\hbox{div}_\eps  {\bf U}_\eps = \partial_{x_1}  { U }^1_\eps+ \partial_{x_2}  {U }^2_\eps +\frac{1}{\eps}\partial_{y_3}  {U }^3_\eps.
\end{align*}

Let $\Omega=\omega \times (0, G_1)$ be the rectangular parallelepiped introduced in Section \ref{PRE}.
Since the domain $\tilde \Omega_\eps$ ``converges'' in some sense to  $\Omega$, as is usual in classical homogenization, extension of ${\bf U }$ to the whole $\Omega$ can be used to obtain suitable estimates in the fixed domain $\Omega$ and to pass to the limit.


\begin{proposition}\label{behavior velocity}
Let $\tilde {\bf U }_\eps \in H^1_0(\Omega)^3$ be the extension by zero of ${\bf U}_\eps$ to $\Omega$. Then, up to a subsequence, still denoted by $\eps$, there exists ${\bf U} \in H^1((0,G_1);L^2(\omega)^3)$ such that
\begin{equation}\label{conv1}
\tilde {\bf U }_\eps \weto  {\bf U} \quad \hbox{w}-H^1((0,G_1);L^2(\omega)^3).
\end{equation}
Moreover,  ${\bf U } = (\hat U, 0)$ satisfies
\begin{equation}\label{conditionsU}
\left\{
\begin{aligned}
\mathrm{div}_{\hat x}\left(\int_{0}^{G_1}{\hat U}(\hat x, y_3)\, dy_3 \right) =0 &\hbox{ in } \omega,\\
\left(\int_{0}^{G_1}{\hat U}(\hat x, y_3) \, dy_3 \right) \cdot n =0 &\hbox{ on }  \partial \omega,
\end{aligned}
\right.
\end{equation}
where $n$ is the outward normal to $\omega$.
\end{proposition}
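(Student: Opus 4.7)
The plan is first to transfer the \emph{a priori} bounds from $\Omega_\eps$ to the partially rescaled domain $\tilde\Omega_\eps$ through $x_3=\eps y_3$. The change-of-variables Jacobian $\eps$ exactly cancels the rescaled-norm factor $\eps^{-1/2}$ (Remark \ref{rescaled norm}), yielding $\|{\bf U}_\eps\|_{L^2(\tilde\Omega_\eps)^3}=|||{\bf u}_\eps|||_{L^2(\Omega_\eps)^3}$ and, by the chain rule, $\|\partial_{y_3}{\bf U}_\eps\|_{L^2(\tilde\Omega_\eps)^3}=\eps\,|||\partial_{x_3}{\bf u}_\eps|||_{L^2(\Omega_\eps)^3}$. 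Estimates \eqref{v}--\eqref{gradv} then produce uniform bounds on $\|{\bf U}_\eps\|_{L^2}$ and on $\|\partial_{y_3}{\bf U}_\eps\|_{L^2}$, while horizontal derivatives only satisfy $\|\nabla_{\hat x}{\bf U}_\eps\|_{L^2}\leq C/\eps$, which I will not try to use. Since ${\bf u}_\eps$ vanishes on the whole $\partial\Omega_\eps$, ${\bf U}_\eps$ vanishes on $\partial\tilde\Omega_\eps$, so the zero extension $\tilde{\bf U}_\eps$ lies in $H^1_0(\Omega)^3$ and inherits the same $L^2$ bounds on itself and on its $y_3$-derivative. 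Weak compactness in $H^1((0,G_1);L^2(\omega)^3)$ extracts a subsequence converging to some ${\bf U}$, which is exactly \eqref{conv1}.

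\textbf{Step 2 (vanishing of $U^3$).} Next I would show $U^3\equiv 0$. The rescaled incompressibility $\mathrm{div}_\eps{\bf U}_\eps=0$ extends distributionally to all of $\Omega$ after zero extension, precisely because the full no-slip condition gives ${\bf U}_\eps\cdot n=0$ on $\partial\tilde\Omega_\eps$; so $\partial_{y_3}\tilde U_\eps^3=-\eps(\partial_{x_1}\tilde U_\eps^1+\partial_{x_2}\tilde U_\eps^2)$ in $\Omega$. Testing against $\psi\in C^\infty_c(\Omega)$ and integrating by parts on both sides, the right-hand side is $O(\eps)$ from the uniform $L^2$ bound on the horizontal components, while the left-hand side converges to $-\int_\Omega U^3\partial_{y_3}\psi\,dy$. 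Hence $\partial_{y_3}U^3=0$ in $\Omega$. The trace map $H^1((0,G_1);L^2(\omega))\to L^2(\omega)$ at $y_3=0$ is weakly continuous, and $\tilde U_\eps^3|_{y_3=0}=0$ for every $\eps$, so $U^3|_{y_3=0}=0$, which combined with $\partial_{y_3}U^3=0$ forces $U^3\equiv 0$.

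\textbf{Step 3 (conditions on $\int_0^{G_1}\hat U\,dy_3$).} For \eqref{conditionsU} I would test $\mathrm{div}_\eps\tilde{\bf U}_\eps=0$ against a scalar $\varphi(\hat x)\in C^\infty(\bar\omega)$ integrated over $\Omega$. The $\frac{1}{\eps}\partial_{y_3}\tilde U_\eps^3$ contribution integrates out because $\tilde U_\eps^3$ has zero trace at both $y_3=0$ and $y_3=G_1$, leaving
\begin{equation*}
\int_\Omega\bigl(\partial_{x_1}\tilde U_\eps^1+\partial_{x_2}\tilde U_\eps^2\bigr)\varphi(\hat x)\,dy=0.
\end{equation*}
Integration by parts in $\hat x$, with the lateral boundary term vanishing because $\tilde{\bf U}_\eps=0$ on $\partial\omega\times(0,G_1)$, followed by Fubini, gives
\begin{equation*}
\int_\omega\Bigl(\int_0^{G_1}\hat{\tilde U}_\eps(\hat x,y_3)\,dy_3\Bigr)\cdot\nabla_{\hat x}\varphi\,d\hat x=0.
\end{equation*}
Weak $L^2$ convergence of $\tilde{\bf U}_\eps$ passes to the limit, yielding the same identity with $\hat U$ in place of $\hat{\tilde U}_\eps$ for every $\varphi\in C^\infty(\bar\omega)$. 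Restricting to $\varphi\in C^\infty_c(\omega)$ gives the divergence-free condition in $\omega$, and allowing general $\varphi$ then extracts the normal-trace condition on $\partial\omega$.

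\textbf{Main obstacle.} The delicate point is that horizontal derivatives of $\tilde{\bf U}_\eps$ blow up like $1/\eps$, so one can only take weak limits of $\tilde{\bf U}_\eps$ itself, never of $\nabla_{\hat x}\tilde{\bf U}_\eps$. Every identity in Steps 2 and 3 therefore has to be integrated by parts before the limit $\eps\to 0$ is taken, so that the uncontrollable $\nabla_{\hat x}\tilde{\bf U}_\eps$ never appears under the limit. Keeping the incompressibility valid on all of $\Omega$ after the zero extension, which is what makes the argument for $U^3=0$ go through, relies in turn on the full Dirichlet condition on $\partial\Omega_\eps$.
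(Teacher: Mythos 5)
Your proposal is correct and follows essentially the same route as the paper: rescale the a priori bounds to get weak compactness in $H^1((0,G_1);L^2(\omega)^3)$, exploit the rescaled incompressibility (which extends to all of $\Omega$ because the full Dirichlet condition makes the zero extension still divergence-free) to kill $U^3$ via the trace at $y_3=0$, and divide the incompressibility constraint by $\eps$, integrate by parts in $\hat x$, and pass to the limit to obtain \eqref{conditionsU}. The only minor differences are cosmetic: the paper also records $\eps\partial_{x_i}\tilde{\bf U}_\eps\rightharpoonup 0$ and uses traces at both $y_3=0$ and $y_3=G_1$, whereas you use a single trace and obtain the normal-flux condition on $\partial\omega$ by allowing test functions in $C^\infty(\bar\omega)$ rather than only $\mathcal D(\omega)$, which is actually the cleaner way to extract the boundary statement.
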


\begin{proof}
From the \emph{a priori} estimates \eqref{v} and \eqref{gradv} we deduce
$$\left\Vert\tilde {\bf U }_\eps\right\Vert_{L^2(\Omega)^3}\leq C,\; \left\Vert\frac{\partial \tilde {\bf U }_\eps}{\partial y_3} \right\Vert_{L^2(\Omega)^3}\leq C, \; \eps \left\Vert\frac{\partial \tilde {\bf U }_\eps}{\partial x_i} \right\Vert_{L^2(\Omega)^3}\leq C, \quad i=1,2. $$
Therefore, there exists ${\bf U} \in H^1((0,G_1);L^2(\omega)^3)$ such that, up to a subsequence, we have
\begin{align*}
\tilde {\bf U }_\eps \weto {\bf U} &\quad \hbox{ w}-L^2(\Omega)^3,\\
\frac{\partial \tilde {\bf U }_\eps}{\partial y_3} \weto \frac{\partial \bf U}{\partial y_3}  &\quad \hbox{ w}-L^2(\Omega)^3,\\
\eps\frac{\partial \tilde {\bf U }_\eps}{\partial x_i} \weto {\bf z}_i  &\quad \hbox{ w}-L^2(\Omega)^3, \; i=1,2.
\end{align*}
Moreover, taking into account that $\dfrac{\partial \tilde {\bf U }_\eps}{\partial x_i}$ is bounded in $H^{-1}(\Omega)^3$, we get $ {\bf z}_i=0$, for $i=1,2.$

Now, we are going to prove that $U_3=0$. The incompressibility condition implies that\\
\begin{equation}\label{incom}
\eps\frac{\partial \tilde {U }^1_\eps}{\partial x_1}+\eps\frac{\partial \tilde {U} ^2_\eps}{\partial x_2}+ \frac{\partial \tilde {U}^3 _\eps}{\partial y_3}=0.
\end{equation}
Consequently, we have
$$\int_{\Omega } \left(\eps\frac{\partial \tilde {U }^1_\eps}{\partial x_1}+\eps\frac{\partial \tilde {U} ^2_\eps}{\partial x_2}+ \frac{\partial \tilde {U}^3 _\eps}{\partial y_3}\right) \varphi \; dx=0, \; \forall \varphi \in \mathcal{D}(\Omega).$$
Passing to the limit  we get
\begin{equation}\label{x3}
 \int_{\Omega }\frac{\partial {U^3 }}{\partial y_3} \varphi \; dx=0, \; \forall \varphi \in \mathcal{D}(\Omega),
 \end{equation}
which implies that $U^3$ does not depend on $y_3$.  

On the other side, the continuity of the trace operator from the space of functions $v$ such that $\|\tilde{v}\|_{L^2(\Omega)}$ and $\|\partial_{y_3}\tilde{v}\|_{L^2(\Omega)}$ are bounded to $L^2(\omega\times \{G_1\})$ and to $L^2(\omega\times \{0\})$ implies 
\begin{equation}\label{boundary}
{\bf U}(\hat x, 0)={\bf U}(\hat x, G_1)=0.
\end{equation}
Hence, combining \eqref{x3} and \eqref{boundary}, we prove that $ {U^3 }=0$.

In order to prove \eqref{conditionsU}, let $\varphi \in \mathcal{D}(\omega)$. Multiplying \eqref{incom} by $\frac{1}{\eps}\varphi$ and integrating by parts, we get
$$\int_{\Omega} \left(\tilde{U}^1_\eps \frac{\partial \varphi}{\partial x_1} + \tilde{U}^2_\eps \frac{\partial \varphi}{\partial x_2}\right)\, d{\hat x}dy_3=0.$$
Passing to the limit, by \eqref{conv1}, we get the result. \end{proof}
\bigskip

Now, we should take into account that the extension by zero of the velocity does not capture the effects of the rough boundary.
Therefore, in the next proposition, we get the limit ${\bf u}$ for the unfolded velocity field $\mathcal{T}_\eps({\bf u}_\eps)$, which  helps us to understand how the microscopic geometry of the domain affects the behavior of the fluid. Moreover, we show the relationship between ${\bf u}$ and ${\bf U}$.

\begin{proposition}\label{limit u}
Let ${\bf u}_\eps$ be the solution of \eqref{vfluid}.  Then, up to a subsequence, still denoted by $\eps$, there exists ${\bf u} \in L^2(\omega;H^1(Y^*)^3)$ such that
\begin{align}
\mathcal{T}_\eps({\bf u}_\eps)\weto  {\bf u} &\quad \hbox{w}-L^2\big( \omega; H^1(Y^*)^3\big),\label{uconv1}\\
 \eps \mathcal{T_\eps}\Big(\frac{\partial {\bf u}_\eps}{\partial x_i}\Big)\weto  \frac{\partial {\bf u}}{\partial y_i} ,&\quad \hbox{w}-L^2\big(\omega\times Y^*\big)^3, i=1,2,3,\label{uconv2}\\
\mathrm{div}_{\bf y}{\bf u}=0 &\quad \hbox{ in } \omega \times Y^*,\label{div}\\
{\bf u}=0 &\quad \hbox{ on } \omega \times \partial Y^\ast_{sup}\cup \omega \times \partial Y^\ast_{sup}\label{bound}
\end{align}

Moreover, since the function ${\bf u}$ satisfies the following conditions
%
\begin{align}
\int_{Y\ast} u_3\, d\bf y=0, \label{cond0bis}\\
\mathrm{div}_{\hat x}\left(\int_{Y^*}{ \hat u} d\bf{y} \right) =0 &\quad \hbox{in } \omega,\label{cond1}\\
\left(\int_{Y^*} {\hat u}\, d{\bf y} \right) \cdot n =0 &\quad \hbox{ on } \partial\omega.\label{cond2}
\end{align}

\end{proposition}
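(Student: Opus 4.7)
\textbf{Weak convergences and intrinsic conditions on ${\bf u}$.}
I would start from the a priori bounds \eqref{v}--\eqref{gradv} combined with properties (iv)--(v) of Proposition \ref{properties}: these give $\|\mathcal{T}_\eps({\bf u}_\eps)\|_{L^2(\omega\times Y^*)^3}\le C$ and $\|\partial_{y_i}\mathcal{T}_\eps({\bf u}_\eps)\|_{L^2(\omega\times Y^*)^3}=\eps\|\mathcal{T}_\eps(\partial_{x_i}{\bf u}_\eps)\|_{L^2(\omega\times Y^*)^3}\le C$, so $\{\mathcal{T}_\eps({\bf u}_\eps)\}$ is bounded in $L^2(\omega;H^1(Y^*)^3)$; weak compactness then produces a limit ${\bf u}$ obeying \eqref{uconv1}, and identifying $\partial_{y_i}\mathcal{T}_\eps({\bf u}_\eps)=\eps\mathcal{T}_\eps(\partial_{x_i}{\bf u}_\eps)$ via \eqref{derivative} converts this into \eqref{uconv2}. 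For \eqref{div} I would apply property (v) to the incompressibility $\mathrm{div}\,{\bf u}_\eps=0$, obtaining $\mathrm{div}_{\bf y}\mathcal{T}_\eps({\bf u}_\eps)=\eps\mathcal{T}_\eps(\mathrm{div}\,{\bf u}_\eps)=0$ on $\omega^\eps\times Y^*$, and pass to the limit. The traces \eqref{bound} will follow by evaluating the no-slip condition at $y_3=0$ and $y_3=G(\hat y)$: the latter uses the $Y$-periodicity of $G$, which gives $G\big(\eps[\hat x/\eps]_LL+\eps\hat y\big)/\eps=G(\hat y)$, so that ${\bf u}_\eps$ vanishes precisely at the unfolded top boundary.

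\textbf{Link with the rescaled limit ${\bf U}$.}
The key to \eqref{cond0bis}--\eqref{cond2} is to show that the $\hat y$-integral of ${\bf u}(\hat x,\cdot,y_3)$ over the slice $\{\hat y\in Y:G(\hat y)>y_3\}$ equals $L_1L_2{\bf U}(\hat x,y_3)$, where ${\bf U}$ is the limit from Proposition \ref{behavior velocity}. To this end, for any $\varphi\in\mathcal{D}(\omega\times(0,G_1))$ I would set $\psi_\eps({\bf x})=\varphi(\hat x,x_3/\eps)$ and apply properties (ii)--(iii):
$$\int_{\omega\times Y^*}\mathcal{T}_\eps({\bf u}_\eps)\,\mathcal{T}_\eps(\psi_\eps)\,d\hat x\,d{\bf y}=L_1L_2\,\eps^{-1}\int_{\Omega_\eps^0}{\bf u}_\eps\psi_\eps\,d{\bf x}.$$
The change of variable $x_3=\eps y_3$ rewrites the right-hand side as $L_1L_2\int_{\tilde\Omega_\eps^0}{\bf U}_\eps\,\varphi\,d\hat x\,dy_3$. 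Using the weak convergence $\tilde{\bf U}_\eps\weto{\bf U}$ in $L^2(\Omega)^3$ from Proposition \ref{behavior velocity}, and controlling the defect $\tilde\Omega_\eps\setminus\tilde\Omega_\eps^0$ (of measure $O(\eps)$) by Cauchy--Schwarz, the right-hand side tends to $L_1L_2\int_\Omega{\bf U}\varphi\,d\hat x\,dy_3$. On the left, the uniform convergence $\mathcal{T}_\eps(\psi_\eps)\to\varphi$ on $\omega\times Y^*$ together with \eqref{uconv1} yields the limit $\int_{\omega\times Y^*}{\bf u}\,\varphi\,d\hat x\,d{\bf y}$. Letting $\varphi$ vary produces the slicewise identity; integrating in $y_3$ and using ${\bf U}=(\hat U,0)$ delivers \eqref{cond0bis} as well as $\int_{Y^*}\hat u\,d{\bf y}=L_1L_2\int_0^{G_1}\hat U\,dy_3$, whereupon the conditions \eqref{conditionsU} on $\hat U$ transfer verbatim to give \eqref{cond1}--\eqref{cond2}.

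\textbf{Principal obstacle.}
The hard part will be the identification step in the second paragraph. The unfolding identity (iii) is only exact on $\Omega_\eps^0$, and a clean passage to the limit requires simultaneously (a) showing that the defect integral over $\Omega_\eps\setminus\Omega_\eps^0$ (equivalently $\tilde\Omega_\eps\setminus\tilde\Omega_\eps^0$) is $o(1)$ via Cauchy--Schwarz against the $L^2$-bound on ${\bf U}_\eps$, and (b) handling the weak-times-strong product between $\tilde{\bf U}_\eps$ and $\varphi\,\chi_{\tilde\Omega_\eps^0}$ whose support varies with $\eps$. Both ingredients are standard but their simultaneous deployment---together with the strong convergence $\mathcal{T}_\eps(\psi_\eps)\to\varphi$ that the sequence of $\hat y$-dependent test functions needs---is the most technical point of the proof.
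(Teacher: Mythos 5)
Your proof follows the same overall strategy as the paper: a priori bounds plus the unfolding properties give \eqref{uconv1}--\eqref{uconv2} and \eqref{div}, $Y$-periodicity of $G$ gives the traces \eqref{bound}, and \eqref{cond0bis}--\eqref{cond2} come from identifying $\frac{1}{L_1L_2}\int_{Y^*}{\bf u}\,d{\bf y}$ with $\int_0^{G_1}{\bf U}\,dy_3$ by testing the unfolded velocity against suitable functions through the unfolding integral identity and passing to the limit. The one place you depart from the paper is the test-function class: the paper pairs against $\boldsymbol\varphi\in\mathcal{D}(\omega)^3$ (independent of $y_3$) and obtains the $y_3$-integrated identity directly, which is precisely what \eqref{cond0bis}--\eqref{cond2} require; you pair against $\varphi(\hat x,y_3)$ with $y_3$-dependence, extracting the finer slicewise identity $\int_{\{G(\hat y)>y_3\}}{\bf u}(\hat x,\hat y,y_3)\,d\hat y=L_1L_2\,{\bf U}(\hat x,y_3)$ and then integrating. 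Your version buys more information than is needed, at the price of the extra checks you correctly flag (the strong convergence $\mathcal{T}_\eps(\psi_\eps)\to\varphi$ and the varying-support weak-times-strong pairing). You are also right to call out the defect from $\Omega_\eps^1$: the paper writes the unfolding identity as if exact, but one must indeed estimate $\frac{1}{\eps}\int_{\Omega_\eps^1}{\bf u}_\eps\psi_\eps\,d{\bf x}$ via Cauchy--Schwarz (it is $O(\eps^{1/2})$ by \eqref{gradv} and $|\Omega_\eps^1|=O(\eps^2)$), so your treatment is in fact the more careful one.
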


\begin{proof}
From the \emph{a priori} estimates \eqref{v}, \eqref{gradv} and taking into account property \emph{iv)} in Proposition \ref{properties}, we have
$$\|\mathcal{T_\eps}({\bf u}_\eps)\|_{L^2( \omega \times Y^*)^3} \leq C, \quad  \left\Vert \eps \mathcal{T_\eps}\Big(\frac{\partial {\bf u}_\eps}{\partial x_i}\Big)\right\Vert_{L^2\big( \omega \times Y^*\big)^3} \leq C,\quad \; i=1,2,3.$$

Therefore, in view of property \emph{v)} in Proposition \ref{properties}, we can ensure the existence of ${\bf u} \in L^2(\omega; H^1(Y^*)^3)$, such that convergences \ref{uconv1} and \ref{uconv2} hold, up to a subsequence.

Moreover, since $ {\bf u}_\eps \in V_\eps$, by $i)$ of Proposition \ref{properties}, we have 
$$\sum_{i=1}^3 \mathcal{T_\eps}\Big(\frac{\partial {\bf u}_\eps}{\partial x_i}\Big)=0.$$
Then, multiplying the above equality by $\eps$, using \eqref{derivative} and passing to the limit, we easily obtain \eqref{div}.

Finally, by using the $Y$-periodicity of the function G, and taking into account that $ {\bf u}_\eps$ is zero on the boundary of $\Omega_\eps$, we get
\begin{equation*}
\begin{aligned}
\mathcal{T}_\eps( {\bf u}_\eps)_{|  \omega \times \partial Y^\ast_{inf}}&=\mathcal{T}_\eps( {\bf u}_\eps)_{|y_3=0}=\mathcal{T}_\eps ( {{\bf u}_\eps}_{|x_3=0})=0,\\
\mathcal{T}_\eps( {\bf u}_\eps)_{| \omega \times \partial Y^\ast_{sup}}&= \mathcal{T}_\eps( {\bf u}_\eps)(\hat x, \hat y, G(\hat y))=  {\bf u}_\eps \Big( \eps \Big[\frac{\hat x}{\eps}\Big]_{L}L + \eps \hat y,\eps G(\hat y)\Big)\\
&= {\bf u}_\eps \Big(\eps \Big[\frac{\hat x}{\eps}\Big]_{L}L + \eps \hat y,\eps G(\Big[\frac{\hat x}{\eps}\Big]_{L}L + \hat y)\Big)=0,
\end{aligned}
\end{equation*}
which implies \eqref{bound} on the trace of $\bf u$.

In order to prove \eqref{cond1} and \eqref{cond2}, we need to establish the relation between $\bf u$ and the limit of the rescaled velocity $\bf U$, defined in Proposition \ref{behavior velocity}. To this aim, let us consider $\boldsymbol\varphi \in \mathcal{D}(\omega)^3$. Then, by using the definitions of the rescaled operator and the unfolding operator, we have
\begin{align*}
&\int_{\Omega}\tilde {\bf U }_\eps {\boldsymbol \varphi}\; d{\hat x}dy_3=\int_{\tilde \Omega_\eps} {\bf U }_\eps \boldsymbol\varphi\; d{\hat x}dy_3=\frac{1}{\eps}\int_{\Omega_\eps} {\bf u }_\eps(x_1,x_2,x_3) \boldsymbol\varphi(x_1,x_2)\; d{\bf x}\\
&= \frac{1}{L_1 L_2}\int_{ \omega \times Y^*} \mathcal{T_\eps({\bf u }_\eps)}\mathcal{T}_\eps(\boldsymbol\varphi) d{\hat x} d{\bf y}.
\end{align*}
By convergences \eqref{conv1} and \eqref{uconv1}, we can pass to the limit on the left and right hand side and obtain
$$\int_{\Omega}{\bf U } \boldsymbol\varphi\; d{\hat x}dy_3= \frac{1}{L_1 L_2}\int_{ \omega \times Y^*} {\bf u } \boldsymbol\varphi\;d{\hat x} d{\bf y}, \quad \forall \boldsymbol\varphi \in \mathcal{D}(\omega)^3.$$

Consequently, we get
$$\int_{\omega}\left(\int_0^{G_{1}} {\bf U }(\hat x, y_3)dy_3 \right)\boldsymbol\varphi(\hat x)\; d{\hat x}= \frac{1}{L_1L_2}\int_{\omega}\left(\int_{Y^\ast}{\bf u} (\hat x,\hat y, y_3)d\hat y dy_3\right)\boldsymbol\varphi(\hat x) \, d{\hat x}\qquad \forall \boldsymbol\varphi \in \mathcal{D}(\omega)^3,$$
which naturally implies
\begin{equation}\label{cond0}
\int_{0}^ {G_{1}} {\bf U} (\hat x)\,dy_3=\frac{1}{L_1 L_2}\int_{Y^*}{\bf u }(\hat x,{\bf y})\, d{\bf y}, \text{ for a.e. }\hat x\in \omega.
\end{equation}

%

 Moreover, since $U^3=0$, by \eqref{cond0} we have \eqref{cond0bis}. Finally, \eqref{conditionsU} and \eqref{cond0} immediately imply \eqref{cond1} and \eqref{cond2}.
\end{proof}

\section{Convergence results for the pressure}\label{ConvP}
Obtaining appropriate convergences for the pressure is not immediate. Notice that, by the \emph{a priori} estimate \eqref{p} and by Ne$\breve{c}$as inequality, we have 
$$||| p_\eps|||_{L^2(\Omega_\eps)}\leq C(\Omega_\eps)|||\nabla p_\eps|||_{H^{-1}(\Omega_\eps)^{3}}\leq \eps C(\Omega_\eps).$$

Therefore, it is not obvious how to obtain an estimate of the pressure, in order to get a convergence result. To overcome this difficulty, in previous papers, the extension operator introduced by Tartar in \cite{Tar} was used. In this sense, in 
\cite{BC,Mi} a generalization of the results of Tartar was introduced for the case of Newtonian fluids in a thin film flow with a rough boundary, and in \cite{AS} the authors perform a generalization to the case of a non-Newtonian fluid governed by the Navier-Stokes system.
In this paper, we extend the previous results to the case of a Bingham fluid.  

We consider a smooth surface included in the basic cell $Y^*$ and surrounding the hump such that $Y^*$ is split into two regions $Y_f$ and $Y_m$, (see Figure 2).

We denote:
\begin{align*}
&W=Y\times (0, G_1),\\
&Y_s=W \setminus(Y_m \cup Y_f),\\
& S= \partial Y_m \cap \partial Y_f,
\end{align*}
while the upper boundary of $U$ will be denoted by $\Gamma$.

We suppose from now on the following assumptions:
\begin{itemize}
\item{H1)}  the surface roughness is made of detached smooth humps periodically given on the upper part of the gap,
\item{H2)} the thin domain is given by an exact number of basic cells, that is  $\Lambda_\eps=\emptyset$,
\item{H3)} $\partial Y_m$ is a $C^1$ manifold.
\end{itemize}

\begin{figure}\label{fig2}
\begin{center}
\includegraphics[scale=0.5]{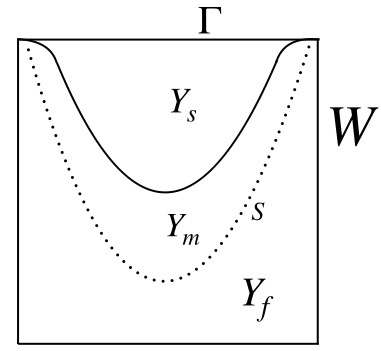}
\caption{Representative cell}
\end{center}
\end{figure}
Therefore, the following lemma holds, (see Lemma 3.1 in \cite{BC}).

\begin{lemma}\label{ext}
Let $\boldsymbol \varphi$ be a function in $H^1(W)^3$ such that $\boldsymbol \varphi=0$ on $\Gamma$. Then, there exists $\boldsymbol \psi \in H^1(Y_m)^3$ such that:
$$\boldsymbol \psi_{|_ S}=\boldsymbol \varphi_{|_ S} \quad \hbox{ and } \quad \boldsymbol \psi_{|_ {\partial Y_m \setminus S}}=0.$$
Moreover, there exists a constant $C$, not depending on $\boldsymbol \varphi$, such that
\begin{align*}
\|\boldsymbol \psi\|_{H^1(Y_m)^3}\leq C \|\boldsymbol \varphi\|_{H^1(W)^3}
\end{align*}
and $\text{div}_\eps \boldsymbol \varphi=0$ implies $\text{div}_{\eps} \boldsymbol \psi=0$.
\end{lemma}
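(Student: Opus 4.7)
The plan is to construct $\boldsymbol\psi$ in two stages: first, a trace-preserving $H^1$-extension $\tilde{\boldsymbol\psi}$ of $\boldsymbol\varphi|_S$ into $Y_m$ that vanishes on the rest of $\partial Y_m$; and second, when $\boldsymbol\varphi$ is $\text{div}_\eps$-free, a Bogovskii-type correction restoring this property without disturbing the boundary values. The $H^1$ bound comes from the first stage alone, while the divergence condition is enforced in the second.

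For the first stage, the standard trace theorem yields $\boldsymbol\varphi|_S\in H^{1/2}(S)^3$ with $\|\boldsymbol\varphi|_S\|_{H^{1/2}(S)}\leq C\|\boldsymbol\varphi\|_{H^1(W)}$. Hypothesis H3 makes $Y_m$ a bounded Lipschitz domain with $C^1$ boundary in which $S$ is a relatively open smooth piece, so the inverse trace theorem provides a lifting $\tilde{\boldsymbol\psi}\in H^1(Y_m)^3$ with $\tilde{\boldsymbol\psi}|_S=\boldsymbol\varphi|_S$, $\tilde{\boldsymbol\psi}|_{\partial Y_m\setminus S}=0$, and $\|\tilde{\boldsymbol\psi}\|_{H^1(Y_m)}\leq C\|\boldsymbol\varphi\|_{H^1(W)}$. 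For the general claim, taking $\boldsymbol\psi:=\tilde{\boldsymbol\psi}$ already gives the required boundary values and estimate.

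For the divergence-preservation claim, assume $\text{div}_\eps\boldsymbol\varphi=0$ and set $f:=-\text{div}_\eps\tilde{\boldsymbol\psi}\in L^2(Y_m)$. Integration by parts, combined with the fact that $\tilde{\boldsymbol\psi}$ vanishes on $\partial Y_m\setminus S$, reduces $\int_{Y_m}f\,dy$ to (minus) the flux of $\boldsymbol\varphi$ across $S$ via the trace matching. A second integration by parts, applied to $\boldsymbol\varphi$ on the complementary region $W\setminus\overline{Y_f}$, whose boundary consists of $S$, the top face $\Gamma$, and the lateral faces of $W$, together with $\text{div}_\eps\boldsymbol\varphi=0$, $\boldsymbol\varphi|_\Gamma=0$, and $Y$-periodicity on the lateral faces, shows that this flux vanishes. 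The compatibility condition $\int_{Y_m}f\,dy=0$ thus holds, and I can invoke the Bogovskii operator on the Lipschitz domain $Y_m$ to get $\boldsymbol w\in H^1_0(Y_m)^3$ with $\text{div}_\eps\boldsymbol w=f$ and $\|\boldsymbol w\|_{H^1(Y_m)}\leq C\|f\|_{L^2(Y_m)}\leq C\|\boldsymbol\varphi\|_{H^1(W)}$. Setting $\boldsymbol\psi:=\tilde{\boldsymbol\psi}+\boldsymbol w$ then produces an element of $H^1(Y_m)^3$ with all the claimed properties.

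The delicate point I expect is handling the anisotropic operator $\text{div}_\eps$ cleanly in the Bogovskii step, since the $1/\eps$ factor on the vertical derivative could, a priori, spoil the uniformity of the constants. The natural workaround is to rescale by $\hat{\psi}_3:=\eps^{-1}\psi_3$ (and similarly for $\tilde{\boldsymbol\psi}$ and $\boldsymbol\varphi$), which converts $\text{div}_\eps$ into the ordinary divergence on a fixed Lipschitz domain; classical Bogovskii then applies with constants depending only on $Y_m$, and undoing the rescaling preserves the $H^1$ estimate. The trace lifting and the compatibility identity are otherwise standard under hypotheses H1--H3 and the $Y$-periodic setting.
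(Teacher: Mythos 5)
Your overall strategy --- lift the trace of $\boldsymbol\varphi$ from $S$ into $Y_m$ with zero on the rest of $\partial Y_m$, then correct the divergence by a Bogovskii-type operator, handling $\mathrm{div}_\eps$ by the rescaling $\psi_3\mapsto\eps^{-1}\psi_3$ --- is the classical Tartar / Bayada--Chambat restriction construction, which is indeed the intended route; the paper itself does not prove the lemma but simply cites Lemma~3.1 of \cite{BC}. Your second stage is essentially correct: the compatibility $\int_{Y_m}f\,d{\bf y}=0$ follows from the flux identity on $W\setminus\overline{Y_f}$ together with $\mathrm{div}_\eps\boldsymbol\varphi=0$ and $\boldsymbol\varphi|_\Gamma=0$, and the rescaling keeps the Bogovskii constant $\eps$-independent. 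One cosmetic slip: you invoke ``$Y$-periodicity on the lateral faces,'' but $\boldsymbol\varphi$ is not assumed $Y$-periodic in the lemma and you do not need it --- by hypothesis H1 the humps are detached, so $W\setminus\overline{Y_f}$ stays at a positive distance from the lateral faces of $W$ and $\partial\big(W\setminus\overline{Y_f}\big)\subset \overline S\cup\Gamma$.

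The real gap is in your first stage. You assert that the inverse trace theorem on the $C^1$ domain $Y_m$ furnishes $\tilde{\boldsymbol\psi}\in H^1(Y_m)^3$ with $\tilde{\boldsymbol\psi}|_S=\boldsymbol\varphi|_S$ and $\tilde{\boldsymbol\psi}|_{\partial Y_m\setminus S}=0$, on the sole grounds that $\boldsymbol\varphi|_S\in H^{1/2}(S)^3$. That is not enough. Such a lift exists if and only if the combined boundary datum (equal to $\boldsymbol\varphi|_S$ on $S$ and to $0$ on $\partial Y_m\setminus S$) belongs to $H^{1/2}(\partial Y_m)^3$, and the extension by zero of a generic $H^{1/2}(S)$ function across $\partial S$ does \emph{not} lie in $H^{1/2}$ of the whole boundary; one needs the $H^{1/2}_{00}$-type decay of $\boldsymbol\varphi|_S$ near $\partial S$. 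What makes this work is precisely the hypothesis $\boldsymbol\varphi|_\Gamma=0$ --- which your first stage never uses: by the geometry of the cell, the seam $\overline S\cap\overline{\partial Y_m\setminus S}$ lies on $\Gamma$, and $\boldsymbol\varphi$ vanishes there, so the combined datum is in fact the trace on $\partial Y_m$ of an $H^1$ function (e.g.\ $\boldsymbol\varphi$ restricted to $W\setminus\overline{Y_f}$ and pulled back to $Y_m$ by a fixed change of variables that fixes $S$ and carries $\Gamma\cap\partial\big(W\setminus\overline{Y_f}\big)$ onto $\partial Y_m\setminus S$), with the $H^1(Y_m)$-norm controlled by $\|\boldsymbol\varphi\|_{H^1(W)^3}$. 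Without this use of $\boldsymbol\varphi|_\Gamma=0$ the lifting step, and hence the whole lemma, fails for generic $\boldsymbol\varphi\in H^1(W)^3$.
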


The previous lemma allows us to construct a restriction operator from the rectangle
$Q_\eps=\omega \times (0,\eps G_1)$ to the thin domain $\Omega_\eps$, (see Lemma 3.2 in \cite{BC}).

\begin{lemma}\label{restriction}
There exists an operator
$R^\eps: H^1_0(Q_\eps)^3 \to H^1_0(\Omega_\eps)^3$ such that
\begin{enumerate}
\item for $\boldsymbol \varphi \in H^1_0(\Omega_\eps)^3$, $R^\eps(\boldsymbol \varphi)=\boldsymbol \varphi$;
\item $\hbox{div} \boldsymbol \varphi=0$ implies $\hbox{div} R^\eps(\boldsymbol \varphi)=0$;
\item for any function $\boldsymbol \varphi \in H^1_0(Q_\eps)^3$ there exists a constant $C$, independent of $\boldsymbol \varphi$ and $\eps$, such that
\begin{align*}
&||| R^\eps(\boldsymbol \varphi)|||_{L^2(\Omega_\eps)^3}\leq C\big( |||\boldsymbol  \varphi |||_{L^2(Q_\eps)^3}+ \eps|||\nabla \boldsymbol \varphi|||_{L^2(Q_\eps)^{3\times3}} \big)\\
& |||\nabla R^\eps(\boldsymbol \varphi)|||_{L^2(\Omega_\eps)^{3\times3}}\leq C\big( \frac{1}{\eps}||| \boldsymbol \varphi |||_{L^2(Q_\eps)^3}+|||\nabla \boldsymbol \varphi|||_{L^2(Q_\eps)^{3\times3}} \big)\\
\end{align*}
\end{enumerate} 
\end{lemma}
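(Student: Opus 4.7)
The plan is to construct $R^\eps$ cell by cell via the reference cell $W$ and then glue, following a Tartar-type scheme as in \cite{BC}. Under assumption H2, $Q_\eps$ decomposes exactly into the $\eps$-cells $W_{ij}^\eps=\omega_{ij}^\eps\times(0,\eps G_1)$, each affinely equivalent to $W=Y\times(0,G_1)$ via the change of variables $\hat x\mapsto\eps\hat y$, $x_3\mapsto\eps y_3$, which identifies $\Omega_\eps\cap W_{ij}^\eps$ with $Y^*=Y_f\cup Y_m$ and the rough upper boundary with $\partial Y_m\setminus S$; by assumption H1 the hump region $Y_m$ is interior to $W$. Given $\boldsymbol\varphi\in H^1_0(Q_\eps)^3$, I denote by $\tilde{\boldsymbol\varphi}\in H^1(W)^3$ the rescaled field on a generic cell; since $\boldsymbol\varphi|_{\partial Q_\eps}=0$, in particular $\tilde{\boldsymbol\varphi}|_\Gamma=0$, so Lemma \ref{ext} is applicable.

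I would then apply Lemma \ref{ext} to obtain $\tilde{\boldsymbol\psi}\in H^1(Y_m)^3$ with $\tilde{\boldsymbol\psi}|_S=\tilde{\boldsymbol\varphi}|_S$, $\tilde{\boldsymbol\psi}|_{\partial Y_m\setminus S}=0$, the norm bound $\|\tilde{\boldsymbol\psi}\|_{H^1(Y_m)^3}\leq C\|\tilde{\boldsymbol\varphi}\|_{H^1(W)^3}$, and divergence preservation. On $Y^*$ I define the local restriction by $\tilde R^\eps(\tilde{\boldsymbol\varphi})=\tilde{\boldsymbol\varphi}$ on $Y_f$ and $\tilde R^\eps(\tilde{\boldsymbol\varphi})=\tilde{\boldsymbol\psi}$ on $Y_m$: matching traces on $S$ yield $\tilde R^\eps(\tilde{\boldsymbol\varphi})\in H^1(Y^*)^3$, and the vanishing trace on $\partial Y_m\setminus S$ gives a zero trace on the rough upper boundary. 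Rescaling back and gluing across cells is consistent because, by assumption H1, the inter-cell vertical faces lie in $\overline{Y_f}$, where the restriction coincides with $\tilde{\boldsymbol\varphi}$. Hence the assembled $R^\eps(\boldsymbol\varphi)$ belongs to $H^1_0(\Omega_\eps)^3$.

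Property (2) is then immediate: divergence vanishes on $Y_f$ because $R^\eps\boldsymbol\varphi=\boldsymbol\varphi$ there, and on $Y_m$ by the divergence-preserving part of Lemma \ref{ext}; the continuity on $S$ rules out any singular interface contribution. For the estimates in (3), I would combine the $W$-scale bound $\|\tilde{\boldsymbol\psi}\|_{H^1(Y_m)}\leq C\|\tilde{\boldsymbol\varphi}\|_{H^1(W)}$ with the scaling of the change of variables ($\|\tilde{\boldsymbol\varphi}\|_{L^2(W)}\sim\eps^{-3/2}\|\boldsymbol\varphi\|_{L^2(W_{ij}^\eps)}$ and $\|\nabla_y\tilde{\boldsymbol\varphi}\|_{L^2(W)}\sim\eps^{-1/2}\|\nabla\boldsymbol\varphi\|_{L^2(W_{ij}^\eps)}$), sum over cells, and pass to the rescaled norms $|||\cdot|||$. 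This yields the two bounds with the expected coefficients.

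The delicate point, and the main obstacle, is property (1). Lemma \ref{ext} only asserts the existence of $\tilde{\boldsymbol\psi}$ with the required trace and divergence conditions, so a priori $\tilde{\boldsymbol\psi}\neq\tilde{\boldsymbol\varphi}|_{Y_m}$ even when $\boldsymbol\varphi\in H^1_0(\Omega_\eps)^3$ (extended by zero to $Q_\eps$) and hence $\tilde{\boldsymbol\varphi}|_{Y_m}$ itself satisfies them. To enforce $R^\eps\boldsymbol\varphi=\boldsymbol\varphi$ on $H^1_0(\Omega_\eps)^3$, one has to fix a canonical choice of extension, for instance by defining $\tilde{\boldsymbol\psi}$ as the unique solution of a Stokes-type boundary value problem in $Y_m$ with velocity data $\tilde{\boldsymbol\varphi}|_S$ on $S$ and $0$ on $\partial Y_m\setminus S$. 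Then, whenever $\tilde{\boldsymbol\varphi}|_{Y_m}$ already solves this system (i.e. when $\boldsymbol\varphi\in H^1_0(\Omega_\eps)^3$ and is divergence-free), uniqueness forces $\tilde{\boldsymbol\psi}=\tilde{\boldsymbol\varphi}|_{Y_m}$, giving (1). This is precisely the point where the specific construction behind Lemma \ref{ext} in \cite{BC} is used.
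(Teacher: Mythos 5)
Your construction is essentially identical to the paper's: both proceed cell by cell under H1--H2, using Lemma \ref{ext} to replace $\tilde{\boldsymbol\varphi}$ by a lifting $\tilde{\boldsymbol\psi}$ on the hump region $Y_m$ while keeping $\tilde{\boldsymbol\varphi}$ on $Y_f$, then rescaling and gluing, with the same affine-scaling computation giving the estimates in (3). Your observation about property (1) --- that Lemma \ref{ext} as stated guarantees only \emph{existence} of $\tilde{\boldsymbol\psi}$, so a canonical choice of lifting (e.g.\ the solution of a Stokes-type problem in $Y_m$ with data $\tilde{\boldsymbol\varphi}|_S$ on $S$ and zero on $\partial Y_m\setminus S$) must be fixed for $R^\eps$ to be a projection --- is a genuine refinement of the paper's sketch, which silently delegates exactly this point to the explicit construction in \cite{BC}.
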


\begin{proof}
For the reader's convenience, following the same idea of \cite{BC}, Lemma 3.1 and \cite{AS}, Lemma 4.6, we will give an indication on how to obtain this restriction operator. Notice that, for any $\boldsymbol \varphi \in H^1(W)^3$ such that $\boldsymbol \varphi=0$ on $\Gamma$, Lemma \ref{ext} allows us to define $R(\boldsymbol \varphi)$ by
\begin{equation*}
R(\boldsymbol \varphi)({\bf y})=
\left\{
\begin{aligned}
\boldsymbol \varphi({\bf y}) &\hbox{ if } {\bf y} \in Y_f,\\
\psi({\bf y}) &\hbox{ if }  {\bf y} \in Y_m,\\
0 &\hbox{ if }  {\bf y} \in Y_s,\\
\end{aligned}
\right.
\end{equation*}
which satisfies $||R(\boldsymbol \varphi)||_{H^1(W)^3}\leq C||\boldsymbol \varphi||_{H^1(W)^3}.$
Then, by assumption H2), we can define $R^\eps$ by applying $R$ to each cell.
\end{proof}
\medskip

By using the extension operator, obtained by duality argument from $R^\eps$, we obtain the required estimate, (see \cite{AS, BC} for details), and an important convergence result as stated by the following proposition.

\begin{proposition}\label{limitp}
Let $({\bf u}_\eps,p_\eps)$ be the solution of \eqref{vpfluid}.  Then, there exists an extension $P_\eps$ of $p_\eps$ to  $Q_\eps$ such that
\begin{equation}\label{estimate pressure}
|||P_\eps |||_{L^2(Q_\eps)}\leq C,
\end{equation}
with $C$ a  positive constant independent of $\eps$.
Moreover, up to a subsequence, still denoted by $\eps$,  there exists $p \in L^2(\omega \times Y^*)$, independent of ${\bf y}$, such that
\begin{equation}\label{weakp}
\mathcal{T}_\eps(P_\eps|_{\Omega_\eps})\weto  p \quad \hbox{w}-L^2(\omega \times Y^*).
\end{equation}

\end{proposition}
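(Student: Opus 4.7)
The plan is to mimic the Tartar--Allaire duality construction, adapted here to the Bingham setting.

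First, I would use the restriction operator $R^\eps: H^1_0(Q_\eps)^3 \to H^1_0(\Omega_\eps)^3$ from Lemma \ref{restriction} to define the extension $P_\eps$ by duality. Introduce the linear functional $F_\eps$ on $H^1_0(Q_\eps)^3$ by
\[
\langle F_\eps, \boldsymbol\varphi\rangle := \langle \nabla p_\eps, R^\eps(\boldsymbol\varphi)\rangle_{H^{-1}(\Omega_\eps)^3,\, H^1_0(\Omega_\eps)^3}, \qquad \boldsymbol\varphi \in H^1_0(Q_\eps)^3.
\]
By property (2) of Lemma \ref{restriction}, $F_\eps$ vanishes on every divergence-free $\boldsymbol\varphi$. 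Combining the a priori estimate \eqref{p}, the gradient bound on $R^\eps$ in Lemma \ref{restriction}, and the Poincar\'e inequality \eqref{Pi} on the thin set $Q_\eps$, I would obtain $|||F_\eps|||_{H^{-1}(Q_\eps)^3} \leq C\eps$. Then De Rham's theorem provides $P_\eps \in L^2_0(Q_\eps)$ with $\nabla P_\eps = F_\eps$, and a rescaled Ne\v cas-type inequality on $Q_\eps$ (equivalently, the standard Ne\v cas inequality on the fixed rectangle $Q = \omega\times(0,G_1)$ after the change of variables $x_3 \mapsto x_3/\eps$) yields the uniform bound $|||P_\eps|||_{L^2(Q_\eps)} \leq C$.

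Next I would verify that $P_\eps|_{\Omega_\eps}$ coincides with $p_\eps$ up to an additive constant that is absorbed by the zero-mean normalization: for every $\boldsymbol\varphi \in H^1_0(\Omega_\eps)^3$, property (1) of Lemma \ref{restriction} gives $R^\eps(\boldsymbol\varphi) = \boldsymbol\varphi$, so $\nabla P_\eps$ and $\nabla p_\eps$ coincide on $\Omega_\eps$. With the uniform $L^2$ bound in hand, property iv) of Proposition \ref{properties} supplies the estimate $\|\mathcal{T}_\eps(P_\eps|_{\Omega_\eps})\|_{L^2(\omega\times Y^*)} \leq C$, from which a subsequence converges weakly to some $p \in L^2(\omega\times Y^*)$.

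The remaining point -- the one I expect to be the delicate step -- is proving that $p$ does not depend on $\mathbf y$. The plan is to exploit the commutation rule of property v) in Proposition \ref{properties}, namely $\partial_{y_i}\mathcal{T}_\eps(\,\cdot\,) = \eps\,\mathcal{T}_\eps(\partial_{x_i}\,\cdot\,)$, in the distributional sense. Using $H^{-1}$--$H^1_0$ duality in the unfolded setting together with the bound $|||\nabla P_\eps|||_{H^{-1}(Q_\eps)^3} \leq C\eps$, I would show that $\partial_{y_i}\mathcal{T}_\eps(P_\eps)$ tends to $0$ in $\mathcal{D}'(\omega\times Y^*)$, which forces $\partial_{y_i}p = 0$ for $i=1,2,3$ and hence the independence of $p$ from $\mathbf y$. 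The main obstacle is lifting the $H^{-1}$ duality rigorously through the unfolding operator for a merely $L^2$ (not $H^1$) pressure; this is where the restriction-operator estimates of Lemma \ref{restriction} together with the $O(\eps)$-bound on $F_\eps$ must be used once more.
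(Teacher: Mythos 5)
Your construction of $P_\eps$ is correct and follows the paper's Step~1. Defining $F_\eps$ by duality through $R^\eps$, checking that it kills divergence-free test fields, invoking De~Rham, and then obtaining the uniform $L^2$ bound via the rescaling $y_3=x_3/\eps$ and a Ne\v cas inequality on the fixed rectangle --- this is exactly what the paper does; the only cosmetic difference is that you plug in the already established estimate \eqref{p} whereas the paper re-runs the test-function argument in \eqref{vpfluid} with $\mathbf v=\mathbf u_\eps\pm R^\eps(\boldsymbol\varphi)$, and these amount to the same computation. The identification $P_\eps|_{\Omega_\eps}=p_\eps$ up to a constant (via $R^\eps(\widetilde{\boldsymbol\varphi})=\boldsymbol\varphi$ for the zero-extension $\widetilde{\boldsymbol\varphi}$ of $\boldsymbol\varphi\in H^1_0(\Omega_\eps)^3$) and the weak convergence of the unfolded pressure are likewise fine.

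The genuine gap is in the $\mathbf y$-independence step, and it is precisely where you flagged trouble. First, property~v) of Proposition~\ref{properties} requires $W^{1,p}$ regularity, which $P_\eps$ does not possess, so the commutation rule cannot be applied to $P_\eps$ directly; this is not merely a technicality to be ``lifted through duality.'' Second, and more seriously, the bound you quote, $|||\nabla P_\eps|||_{H^{-1}(Q_\eps)^3}\leq C\eps$, is too weak to close the argument. Extracting $\mathrm{div}_{\mathbf y}$-information forces one to test against oscillating fields $\boldsymbol\varphi^\eps(\mathbf x)=\phi(\hat x)\boldsymbol\psi(\mathbf x/\eps)$, for which $|||\nabla\boldsymbol\varphi^\eps|||_{L^2(Q_\eps)}=O(1/\eps)$; a plain $H^{-1}$ bound of size $C\eps$ then yields only $\langle\nabla P_\eps,\boldsymbol\varphi^\eps\rangle_{Q_\eps}=O(1)$, which gives no information. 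What is actually needed is the two-term estimate \eqref{estpi}, $\frac{1}{\eps}|\langle\nabla P_\eps,\boldsymbol\varphi\rangle_{Q_\eps}|\leq C\bigl(|||\boldsymbol\varphi|||_{L^2(Q_\eps)^3}+\eps|||\nabla\boldsymbol\varphi|||_{L^2(Q_\eps)^{3\times 3}}\bigr)$, in which \emph{only} the gradient part is damped by $\eps$; with this, $\langle\nabla P_\eps,\boldsymbol\varphi^\eps\rangle_{Q_\eps}=O(\eps)$. One then writes $\langle\nabla P_\eps,\boldsymbol\varphi^\eps\rangle_{Q_\eps}=-\int_{\Omega_\eps}P_\eps\,\mathrm{div}\,\boldsymbol\varphi^\eps\,d\mathbf x$, unfolds the $L^2$ function $P_\eps$ against $\mathrm{div}\,\boldsymbol\varphi^\eps$, and isolates the $\frac{1}{\eps}\phi\,\mathrm{div}_{\mathbf y}\boldsymbol\psi$ contribution to get $\int_{\omega\times Y^*}p\,\phi\,\mathrm{div}_{\mathbf y}\boldsymbol\psi=0$ in the limit. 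If you sharpen your bound to this form and spell out the unfolding, your direct route does work and is leaner than the paper's. The paper instead substitutes $\mathbf v=\mathbf u_\eps\pm\eps\mathbf v^\eps$ into the full variational inequality \eqref{vpfluid}, unfolds, and passes to the limit in every term including the nonlinear yield term via Proposition~\ref{propgradv}; this is more labor but it produces the auxiliary convergences \eqref{pres1}--\eqref{pres4}, which are reused verbatim in the proof of Theorem~\ref{teolim}, so the extra work is amortized.
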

\begin{proof} The proof developes into two steps.\\

\textbf{Step 1.} Let us construct the extension $P_\eps$ to $Q_\eps$ of the pressure $p_\eps$ and prove estimate \eqref{estimate pressure}.
To this aim, let us observe that the operator $R^\eps$ defined in Lemma \ref{restriction} allows us to extend the pressure $p_\eps$ to $Q_\eps$ introducing $F_\eps$ in $H^{-1}(Q_\eps)^3$ defined as follows
\begin{equation}\label{idP}
\langle F_\eps, \boldsymbol  \varphi\rangle_{Q_\eps}=\langle \nabla p_\eps, R^\eps(\boldsymbol \varphi) \rangle_{\Omega_\eps}, \quad \forall \boldsymbol \varphi \in H^1_0(Q_\eps)^3.
\end{equation} 
Now, let us estimate the right hand side by using the variational inequality \eqref{vpfluid}. To this aim, let us take successively ${\bf v}={\bf u}_\eps+R^\eps(\boldsymbol \varphi)$ and ${\bf v}={\bf u}_\eps-R^\eps(\boldsymbol \varphi)$ in \eqref{vpfluid} and have

\begin{equation}\label{F}
|\langle F_\eps, \boldsymbol \varphi \rangle_{Q_\eps}| \leq \mu \eps^2 \left | \int_{\Omega_\eps} \nabla{\bf u}_\eps\cdot \nabla(R^\eps ( \boldsymbol \varphi))\, d{\bf x}\right | +  g\eps \int_{\Omega_\eps}|\nabla (R^\eps ( \boldsymbol \varphi))|\, d{\bf x} +\left|\int_{\Omega_\eps} {\bf f}_\eps R^\eps (\boldsymbol \varphi)\, d{\bf x}\right|.
\end{equation}

\noindent Moreover, by \textit{2.} in Lemma \ref{restriction} and  identification \eqref{idP}, $\hbox{div} {\boldsymbol \varphi}=0$ implies 
$$\langle F_\eps, \boldsymbol \varphi \rangle_{Q_\eps}=0.$$
Hence, the DeRham theorem gives the existence of $P_\eps$ in $L^2_0(Q_\eps)$ such that $F_\eps=\nabla P_\eps$ and, by Holder inequality,  \eqref{F} implies

\begin{align*}
\frac{1}{\eps}|\langle \nabla P_\eps, \boldsymbol \varphi  \rangle_{Q_\eps}| \leq &\mu \eps^2 |||\nabla{\bf u}_\eps|||_{L^2(\Omega_\eps)^{3\times3}}|||\nabla (R^\eps (\boldsymbol \varphi)) |||_{L^2(\Omega_\eps)^{3\times3}} \\
&+ g\eps|||\nabla (R^\eps (\boldsymbol \varphi)) |||_{L^2(\Omega_\eps)^{3\times3}}+ ||| {\bf f}_\eps |||_{L^2(\Omega_\epsilon)^3} ||| R^\eps (\boldsymbol \varphi )|||_{L^2(\Omega_\eps)^{3}}.
\end{align*}
Consequently, by using \eqref{Pi} and estimates \eqref{Hf} and \eqref{v}, we get

$$\frac{1}{\eps}|\langle \nabla P_\eps, \boldsymbol \varphi \rangle_{Q_\eps}|\leq C\eps|||\nabla(R^\eps (\boldsymbol \varphi))|||_{L^2(\Omega_\eps)^{3\times3}},  \; \forall \boldsymbol \varphi \in H^1_0(Q_\eps)^3.$$
Then, by \textit{3.} in Lemma \ref{restriction}, we have
\begin{equation}\label{estpi}
\frac{1}{\eps}|\langle \nabla P_\eps, \boldsymbol \varphi \rangle_{Q_\eps}|\leq C\big(||| \boldsymbol\varphi |||_{L^2(Q_\eps)^3}+\eps |||\nabla \boldsymbol \varphi|||_{L^2(Q_\eps)^{3\times3}} \big).
\end{equation}

\noindent Using the dilatation 
$$y_3=\dfrac{x_3}{\eps}$$
as in Section \ref{ConvV}, let us set $\pi_\eps(x,y_3)=P_\eps(x, \eps y_3)$ and $\nabla_\eps \pi_\eps=(\nabla_{\hat{x}} P_\eps, \frac{1}{\eps}\partial_{y_3} P_\eps)$. Hence, for any $ \boldsymbol \Phi \in H_0^1(\Omega)^3$, we get
\begin{equation}\label{eqpi}
\langle \nabla_{\eps} \pi_\eps, {\boldsymbol \Phi} \rangle_{\Omega}=-\int_{\Omega}\pi_\eps\hbox{div}_\eps\boldsymbol \Phi\,d{\bf x}=-\dfrac{1}{\eps}\int_{Q_\eps}P_\eps\hbox{div}\boldsymbol \varphi\,d{\bf x}=\dfrac{1}{\eps}\langle \nabla P_\eps, {\boldsymbol \varphi} \rangle_{Q_\eps},
\end{equation}
where $ \boldsymbol  \varphi (\hat x, x_3)=\boldsymbol \Phi(\hat{x},\frac{x_3}{\eps})$. Finally, taking into account that $||| \boldsymbol\varphi |||_{L^2(Q_\eps)^3}=||\boldsymbol \Phi||_{L^2(\Omega)^3}$ and $||| \nabla \boldsymbol\varphi |||_{L^2(Q_\eps)^{3\times 3}}=||\nabla_\eps \boldsymbol \Phi||_{L^2(\Omega)^{3\times 3}}$ , by \eqref{estpi} and \eqref{eqpi}, we can write
$$|\langle \nabla_{\eps} \pi_\eps, {\boldsymbol \Phi} \rangle_{\Omega}|=\dfrac{1}{\eps}|\langle \nabla_{\eps} \pi_\eps, {\boldsymbol \varphi} \rangle_{Q_\eps}|\leq C\big(|| \boldsymbol\Phi ||_{L^2(\Omega)^3}+\eps ||\nabla \boldsymbol \Phi||_{L^2(\Omega)^{3\times 3}} \big)\leq C || \boldsymbol \Phi ||_{H^1_0(\Omega)^{3}},$$
which implies
$$||\nabla_\eps \pi_\eps||_{H^{-1}(\Omega)^{3}} \leq C.$$
It follows that (see for instance \cite{GR}, Chapter I, Corollary 2.1) there exists a representative of $\pi_\eps\in L^2_0(\Omega)$ such that
$$
 ||\pi_\eps||_{L^2_0(\Omega)} \leq ||\nabla \pi_\eps||_{H^{-1}(\Omega)^{3}} \leq ||\nabla_\eps \pi_\eps||_{H^{-1}(\Omega)^{3}} \leq C,
$$
which easily implies \eqref{estimate pressure}. \\
Moreover observe that if $\boldsymbol\varphi\in H^1_0(\Omega_\eps)^3$, its extension by zero to $Q_\eps$, denoted by $\widetilde{\boldsymbol\varphi}$, is in $H^1_0(Q_\eps)^3$ and $R^\eps(\widetilde{\boldsymbol \varphi})=\boldsymbol \varphi$. Hence, taking into account \eqref{idP}, by integrating we obtain
\begin{equation}\label{pipressure}
-\int_{\Omega_\eps}P_\eps|_{\Omega_\eps} \hbox{div}\boldsymbol \varphi\, d{\bf x}=-\int_{Q_\eps}P_\eps \hbox{div}\widetilde{\boldsymbol \varphi}\, d{\bf x}=\langle \nabla P_\eps,\widetilde{\boldsymbol \varphi}\rangle_{Q_\eps}=\langle \nabla p_\eps, R^\eps(\boldsymbol \varphi) \rangle_{\Omega_\eps}=-\int_{\Omega_\eps}p_\eps \hbox{div}\boldsymbol\varphi\, d{\bf x}.
\end{equation}
Thus, for each $\boldsymbol\varphi\in H^1_0(\Omega_\eps)^3$ we get 
$$\int_{\Omega_\eps}(P_\eps|_{\Omega_\eps}-p_\eps) \hbox{div}\boldsymbol \varphi\, d{\bf x}=0,$$
which implies $P_\eps=p_\eps$ in $L^2(\Omega_\eps)\slash\R.$\\

\medskip

\textbf{Step 2.} Now let us prove convergence \eqref{weakp} and the independence of ${\bf y}$ of function $p$.\\
To this aim let us observe that by \eqref{estimate pressure} and $iv)$ of Proposition \ref{properties} we get
$$|| \mathcal{T}_\eps(P_\eps|_{\Omega_\eps})||_{L^2(\omega \times Y^*)} \leq C |||P_\eps|_{\Omega_\eps}|||_{L^2(\Omega_\eps)}\leq C |||P_\eps|||_{L^2(Q_\eps)}\leq C$$
which implies, by weak compactness, convergence \eqref{weakp}.

Finally, we shall prove that $p$ does not depend on $\bf y$. To this aim, let us consider ${\bf v}^\eps({\bf x})=\phi(\hat x){\boldsymbol \psi}(\bf x/\eps)$ where $\phi \in \mathcal{D}(\omega)$ and ${\boldsymbol \psi }\in H^1(Y^*)^3$ such that $\boldsymbol \psi$ is $Y-$periodic and ${\boldsymbol \psi}=0 \hbox{ on }\partial Y^\ast_{inf}\cup \partial Y^\ast_{sup}$. Of course ${\bf v}^\eps \in H^1_0(\Omega_\eps)$ and, in view of $vi)$ in Proposition \ref{properties}, it satisfies
\begin{align}
&\frac{\partial {\bf v}^\eps}{\partial x_i} =  \frac{\partial\phi}{\partial x_i}(\hat x)\boldsymbol \psi\Big(\frac{\bf x}{\eps}\Big)+ \frac{1}{\eps} \phi(\hat x) \frac{\partial\boldsymbol \psi}{\partial y_i}\Big(\frac{\bf x}{\eps}\Big),\, i=1,2, \quad \frac{\partial {\bf v}^\eps}{\partial x_3}= \frac{1}{\eps}\phi(\hat x) \frac{\partial\boldsymbol \psi}{\partial y_3}\Big(\frac{\bf x}{\eps}\Big),\label{gradtest}\\
&\mathrm{div}_{\bf x}({\bf v^\eps})=\hat\psi\Big(\frac{\bf x}{\eps}\Big) \mathrm{div}_{\hat x}\phi(\hat{x})+\frac{1}{\eps} \mathrm{div}_{\bf y}\boldsymbol \psi\Big(\frac{\bf x}{\eps}\Big).\nonumber
\end{align}
Hence, by using properties \emph{v)}, \emph{vi)} and \emph{vii)} in Proposition \ref{properties}, we get
\begin{align}
\mathcal{T_\eps}({\bf v}^\eps) \eto \phi \boldsymbol \psi &\quad \hbox{s-}L^2(\omega \times Y^*)^3,\label{testconv1}\\
\eps \mathcal{T_\eps}\Big(\frac{\partial {\bf v}^\eps}{\partial x_i}\Big) \eto  \phi \frac{\partial \boldsymbol \psi}{\partial y_i} &\quad \hbox{s-}L^2(\omega \times Y^*)^3, \; i=1,2,\label{testconv2}\\
\eps\mathcal{T_\eps}\Big(\frac{\partial {\bf v}^\eps}{\partial x_3}\Big) \eto  \phi \frac{\partial \boldsymbol \psi}{\partial y_3} &\quad \hbox{s-}L^2(\omega \times Y^*)^3,\label{testconv3}\\
\eps\mathcal{T_\eps}\Big(\mathrm{div}_{\bf x}{\bf v}_\eps\Big) \eto  \mathrm{div}_{\bf y}\boldsymbol \psi &\quad \hbox{s-}L^2(\omega \times Y^*)^3 .\label{testconv4}
\end{align}
Now let us take ${\bf u}_\eps+\eps{\bf v}_\eps$ as test function in \eqref{vpfluid}. We have
$$ \eps\int_{\Omega_\eps} p_\eps {\rm div}_{\bf x}{\bf v}_\eps\, d{\bf x}\leq \mu \eps^3 \int_{\Omega_\eps} \nabla {\bf u}_\eps\cdot \nabla {\bf v}_\eps\, d{\bf x}+g\eps^2 \int_{\Omega_\eps}|\nabla{\bf v}_\eps|\, d{\bf x}-\eps \int_{\Omega_\eps} {\bf f}_\eps {\bf v}_\eps \, d{\bf x}$$
and by \eqref{pipressure}
$$ \eps\int_{\Omega_\eps}P_\eps|_{\Omega_\eps} {\rm div}_{\bf x}{\bf v}_\eps\, d{\bf x}\leq \mu \eps^3 \int_{\Omega_\eps} \nabla {\bf u}_\eps\cdot \nabla {\bf v}_\eps\, d{\bf x}+g\eps^2 \int_{\Omega_\eps}|\nabla{\bf v}_\eps|\, d{\bf x}-\eps \int_{\Omega_\eps} {\bf f}_\eps {\bf v}_\eps \, d{\bf x}.$$
Then, we apply the unfolding operator to the previous inequality. By property $ii)$ and \emph{iii)} in Proposition \ref{properties}, we have 
\begin{equation}\label{unfoldvar}
\begin{aligned}
&\eps\int_{\omega\times Y^*} \mathcal{T}_\eps(P_\eps|_{\Omega_\eps})\mathcal{T_\eps}\Big(\mathrm{div}_{\bf x}{\bf v}_\eps\Big)\, d{\hat x}d{\bf y}\leq \mu \eps^3 \int_{\omega\times Y^*} \mathcal{T}_\eps(\nabla{\bf u}_\eps)\cdot \mathcal{T}_\eps(\nabla{\bf v}_\eps)\, d{\hat x}d{\bf y} + g\eps^2 \int_{\omega\times Y^*}\mathcal{T}_\eps(|\nabla{\bf v}_\eps)|\, d{\hat x}d{\bf y} \\
&-\eps \int_{\omega\times Y^*}\mathcal{T}_\eps ({\bf f}_\eps)\mathcal{T}_\eps({\bf v}_\eps)\, d{\hat x}d{\bf y}.
\end{aligned}
\end{equation} 
According to convergences \eqref{uconv2}, \eqref{testconv2}, \eqref{testconv3}, we get 
\begin{equation}\label{pres1}
\mu \eps^2 \int_{\Omega_\eps} \mathcal{T}_\eps( \nabla {\bf u}_\eps) \cdot \mathcal{T}_\eps(\nabla {\bf v}_\eps)\, d{\bf x}\to \mu  \int_{\omega\times Y^*}\nabla_{\bf y} u \cdot \big(\phi \nabla_{\bf y}\boldsymbol \psi)\, d{\hat x}d{\bf y}.
\end{equation}
Hence, the first integral in the right-hand side of \eqref{unfoldvar} satisfies
\begin{equation}\label{pres1bis}
\mu \eps^3 \int_{\Omega_\eps}\mathcal{T}_\eps( \nabla {\bf u}_\eps)\cdot \mathcal{T}_\eps(\nabla {\bf v}_\eps)\, d{\bf x}\to 0.
\end{equation}
By Proposition \ref{propgradv}, one has the following equality
$$g\eps \int_{\omega\times Y^*}\mathcal{T}_\eps(|\nabla{\bf v^\eps})|\, d{\hat x}d{\bf y} = g \int_{\omega\times Y^*} |\nabla_{\bf y} \mathcal{T}_\eps({\bf v^\eps})|\, d{\hat x}d{\bf y}.$$
Notice that
\begin{align*}
&\Big|\int_{\omega\times Y^*} |\nabla_{\bf y} \mathcal{T}_\eps({\bf v^\eps})|\, d{\hat x}d{\bf y} -  \int_{\omega\times Y^*} |\phi \nabla_{\bf y} \boldsymbol \psi|\, d{\hat x}d{\bf y}\Big|
\leq \int_{\omega\times Y^*} \big|\nabla_{\bf y}\mathcal{T}_\eps({\bf v^\eps})-\phi \nabla_{\bf y}\boldsymbol \psi\big|\, d{\hat x}d{\bf y}\\
& = \int_{\omega\times Y^*}  \big| \eps \mathcal{T}_\eps(\nabla \phi)\boldsymbol \psi+\mathcal{T}_\eps(\phi)\nabla_{\bf y}\boldsymbol \psi-\phi \nabla_{\bf y} \boldsymbol \psi\big|\, d{\hat x}d{\bf y}.
\end{align*}
By using properties \emph{vii)} in Proposition \ref{properties}, we have
\begin{align*}
\eps \mathcal{T}_\eps(\nabla \phi)\eto 0 &\quad \hbox{s-}L^2(\omega \times Y^*),\\
\mathcal{T}_\eps(\phi)\eto\phi &\quad \hbox{s-}L^2(\omega \times Y^*),
\end{align*}
then, we get the following convergence
\begin{equation}\label{pres2}
g\eps \int_{\omega\times Y^*}\mathcal{T}_\eps(|\nabla{\bf v^\eps})|\, d{\hat x}d{\bf y} \to g\int_{\omega\times Y^*}|\boldsymbol \phi \nabla_{\bf y}\boldsymbol \psi|\, d{\hat x}d{\bf y}.
\end{equation}
Hence, the second integral in the right-hand side of \eqref{unfoldvar} satisfies
\begin{equation}\label{pres2bis}
g\eps^2 \int_{\omega\times Y^*}\mathcal{T}_\eps(|\nabla{\bf v^\eps})|\, d{\hat x}d{\bf y} \to 0.
\end{equation}
By \eqref{Hf}, $iv)$ in Proposition \ref{properties} and  \eqref{testconv1}, we get 
\begin{equation}\label{pres3}
\eps\int_{\omega\times Y^*}\mathcal{T}_\eps ({\bf f}_\eps)\mathcal{T}_\eps({\bf v_\eps})\, d{\hat x}d{\bf y} \to 0.
\end{equation}
Finally, by \eqref{weakp} and \eqref{testconv4}, we obtain
\begin{equation}\label{pres4}
\eps\int_{\omega\times Y^*} \mathcal{T}_\eps(P_\eps|_{\Omega_\eps})\mathcal{T_\eps}\Big(\mathrm{div}_{\bf x}{\bf v}_\eps\Big)\, d{\hat x}d{\bf y}\to \int_{\omega\times Y^*} p\, \phi\, \mathrm{div}_{\bf y}\,\boldsymbol{\psi}\, d{\hat x}d{\bf y}.
\end{equation}
Then, by \eqref{pres1bis}, \eqref{pres2bis}, \eqref{pres3} and \eqref{pres4}, we can pass to the limit when $\eps$ goes to zero in \eqref{unfoldvar} and get
$$\int_{\omega\times Y^*} p\, \phi\, \mathrm{div}_{\bf y}\,\boldsymbol{\psi}\, d{\hat x}d{\bf y}\leq 0 \;\; \qquad\forall \phi \in \mathcal{D}(\omega) \text{ and  } {\boldsymbol \psi }\in H^1(Y^*)^3.$$
If we choose ${\bf u}_\eps-\eps{\bf v}_\eps$ as test function in \eqref{vpfluid}, by arguing similarly, we obtain
$$\int_{\omega\times Y^*} p\, \phi\, \mathrm{div}_{\bf y}\,\boldsymbol{\psi}\, d{\hat x}d{\bf y}\geq 0 \;\; \qquad\forall \phi \in \mathcal{D}(\omega) \text{ and  } {\boldsymbol \psi }\in H^1(Y^*)^3.$$
Thus, we can deduce
$$\int_{\omega\times Y^*} p\, \phi\, \mathrm{div}_{\bf y}\,\boldsymbol{\psi}\, d{\hat x}d{\bf y}=0 \;\; \qquad\forall \phi \in \mathcal{D}(\omega) \text{ and  } {\boldsymbol \psi }\in H^1(Y^*)^3,$$
and, by density of the tensor product $\mathcal{D}(\omega)\otimes H^1(Y^*)^3 $,
$$\int_{\omega\times Y^*} p\,\mathrm{div}_{\bf y}\,\boldsymbol{\Psi}\, d{\hat x}d{\bf y}=0 \;\; \forall \boldsymbol \Psi \in L^2(\omega; H^1(Y^*)^3),$$
which shows that the pressure $p$ doesn't depend on ${\bf y}$.
\end{proof}

\bigskip

\section{The limit problem}\label{Main}
In this section, we state and prove the main result of our paper. 
\begin{theorem}\label{teolim}
Let $({\bf u}_\eps, p_\eps)$ the unique solution of problem \eqref{vpfluid}. Let ${\bf f}_\eps \in L^2(\Omega_\eps)^3$ satisfying \eqref{Hf} and let us suppose there exists a function ${\bf f}\in L^2\big( \omega \times Y^*\big)$ such that 
\begin{equation}\label{convforce}
\mathcal{T}_\eps({\bf f}_\eps) \eto {\bf f} \quad \hbox{s}-L^2\big( \omega \times Y^*\big).
\end{equation}
Moreover let $P_\eps$ the extension of the pressure to $Q_\eps$, then there exist ${\bf u} \in L^2(\omega, H^1(Y^*)^3)$ and $p \in L^2(\omega)$ such that 
\begin{equation}\label{limfinalu}
\mathcal{T}_\eps({\bf u}_\eps)\weto  {\bf u} \quad \hbox{w}-L^2\big( \omega; H^1(Y^*)^3\big)
\end{equation}
and
\begin{equation}\label{limfinalp}
\mathcal{T}_\eps(P_\eps|_{\Omega_\eps})\weto  p \quad \hbox{w}-L^2(\omega \times Y^*),
\end{equation}
where the couple $({\bf u},\,p)$ satisfies the following limit problem
\begin{equation}\label{limitproblem}
\begin{aligned}
\mu \int_{\omega\times Y^*} & \nabla_{\bf y}{\bf u} \cdot \nabla_{\bf y}({\boldsymbol \Psi}-{\bf u})\, d{\hat x} d{\bf y} + g \int_{\omega\times Y^*}|\nabla_{\bf y}{\boldsymbol \Psi}|\, d{\hat x}d{\bf y} -g \int_{\omega\times Y^*}|\nabla_{\bf y}{\bf u}|\, d{\hat x}d{\bf y}\\
&\geqslant \int_{\omega\times Y^*} {\bf f} (\boldsymbol \Psi-{\bf u})\, d{\hat x}d{\bf y} -\int_{\omega\times Y^*} \nabla_{\hat x}p \left(\hat{\Psi}-\hat{u}\right)\, d{\hat x}d{\bf y}\quad \forall\,\boldsymbol \Psi \in \mathcal{V},
\end{aligned}
\end{equation}
with
\begin{equation*}
\begin{array}{l}
\mathcal{V}=\left\{
{\boldsymbol \Psi} \in L^2(\omega; H^1(Y^*))^3:\, {\boldsymbol \Psi}=0 \hbox{ on } \omega \times\partial Y^\ast_{inf}\cup \omega \times\partial Y^\ast_{sup},\,\mathrm{div}_{\bf y} {\boldsymbol \Psi}=0,\right.\\
\\
\left. \qquad\mathrm{div}_{\hat x}\Big(\displaystyle{\int_{Y^*} }{ \hat \Psi} d{\bf y} \Big) =0 \; \hbox{in } \omega,\, \Big(\displaystyle{\int_{Y^*} }{\hat \Psi} d{\bf y} \Big) \cdot n =0 \hbox{ on } \partial\omega\right\}.
\end{array}
\end{equation*}
\end{theorem}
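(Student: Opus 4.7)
The plan is to pass to the limit in the variational inequality \eqref{vpfluid}, using the convergences of ${\bf u}_\eps$ and $P_\eps$ already obtained in Propositions \ref{limit u} and \ref{limitp} (which in particular yield \eqref{limfinalu} and \eqref{limfinalp}). The remaining task is to derive the limit inequality \eqref{limitproblem}.

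First, I would construct appropriate two-scale test functions. For any $\boldsymbol\Psi(\hat x,{\bf y})$ in $\mathcal{D}(\omega; C^\infty_{per,0}(Y^*))^3$, smooth, $Y$-periodic in $\hat y$, vanishing on $\partial Y^\ast_{inf}\cup\partial Y^\ast_{sup}$ and satisfying $\mathrm{div}_{\bf y}\boldsymbol\Psi=0$, set $\boldsymbol\Psi^\eps({\bf x})=\boldsymbol\Psi(\hat x,\hat x/\eps,x_3/\eps)$. Then $\boldsymbol\Psi^\eps\in H^1_0(\Omega_\eps)^3$, and the same kind of computation as performed at the end of Proposition \ref{limitp} (using properties v), vi), vii) of Proposition \ref{properties}) gives the strong convergences $\mathcal{T}_\eps(\boldsymbol\Psi^\eps)\to\boldsymbol\Psi$ and $\eps\mathcal{T}_\eps(\nabla\boldsymbol\Psi^\eps)\to\nabla_{\bf y}\boldsymbol\Psi$ in $L^2(\omega\times Y^*)$. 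Moreover the constraint $\mathrm{div}_{\bf y}\boldsymbol\Psi=0$ produces $\mathrm{div}_{\bf x}\boldsymbol\Psi^\eps=(\mathrm{div}_{\hat x}\hat\Psi)(\hat x,\hat x/\eps,x_3/\eps)$, which stays bounded as $\eps\to 0$, and $\mathcal{T}_\eps(\mathrm{div}_{\bf x}\boldsymbol\Psi^\eps)\to\mathrm{div}_{\hat x}\hat\Psi$ strongly in $L^2$.

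Next, I would insert ${\bf v}=\boldsymbol\Psi^\eps$ in \eqref{vpfluid}, divide by $\eps$ and exploit $\mathrm{div}_{\bf x}{\bf u}_\eps=0$ to obtain
\begin{equation*}
\mu\eps\!\int_{\Omega_\eps}\!\nabla{\bf u}_\eps\cdot\nabla(\boldsymbol\Psi^\eps-{\bf u}_\eps)+g\!\int_{\Omega_\eps}\!|\nabla\boldsymbol\Psi^\eps|-g\!\int_{\Omega_\eps}\!|\nabla{\bf u}_\eps|\ \geq\ \frac{1}{\eps}\!\int_{\Omega_\eps}\!{\bf f}_\eps(\boldsymbol\Psi^\eps-{\bf u}_\eps)+\frac{1}{\eps}\!\int_{\Omega_\eps}\!p_\eps\,\mathrm{div}_{\bf x}\boldsymbol\Psi^\eps.
\end{equation*}
I then apply the unfolding operator term-by-term via property iii) of Proposition \ref{properties} (the integration defect on $\Omega^1_\eps$ being zero under H2). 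The bilinear term converges to $\frac{\mu}{L_1L_2}\int_{\omega\times Y^*}\nabla_{\bf y}{\bf u}\cdot\nabla_{\bf y}\boldsymbol\Psi$ by weak--strong pairing of $\eps\mathcal{T}_\eps(\nabla{\bf u}_\eps)\rightharpoonup\nabla_{\bf y}{\bf u}$ and $\eps\mathcal{T}_\eps(\nabla\boldsymbol\Psi^\eps)\to\nabla_{\bf y}\boldsymbol\Psi$; the force term converges to $\frac{1}{L_1L_2}\int{\bf f}(\boldsymbol\Psi-{\bf u})$ thanks to \eqref{convforce} and \eqref{limfinalu}; and the pressure term---observing that $P_\eps|_{\Omega_\eps}$ differs from $p_\eps$ only by a constant that contributes nothing because $\int_{\Omega_\eps}\mathrm{div}\boldsymbol\Psi^\eps=0$---converges to $\frac{1}{L_1L_2}\int_{\omega\times Y^*}p\,\mathrm{div}_{\hat x}\hat\Psi$ by \eqref{limfinalp} and the strong convergence of $\mathcal{T}_\eps(\mathrm{div}_{\bf x}\boldsymbol\Psi^\eps)$.

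The main technical obstacle is the passage to the limit in the two negative terms involving ${\bf u}_\eps$, where weak convergence alone is not enough. Unfolding $-\mu\eps\int|\nabla{\bf u}_\eps|^2=-\frac{\mu}{L_1L_2}\int|\eps\mathcal{T}_\eps(\nabla{\bf u}_\eps)|^2$ and invoking weak lower-semicontinuity of the squared $L^2$-norm bounds its $\limsup$ by $-\frac{\mu}{L_1L_2}\int|\nabla_{\bf y}{\bf u}|^2$. By Proposition \ref{propgradv}, the yield-stress term rewrites as $-g\int|\nabla{\bf u}_\eps|=-\frac{g}{L_1L_2}\int|\nabla_{\bf y}\mathcal{T}_\eps({\bf u}_\eps)|$, and since $\nabla_{\bf y}\mathcal{T}_\eps({\bf u}_\eps)\rightharpoonup\nabla_{\bf y}{\bf u}$ weakly in $L^2$, lower-semicontinuity of the $L^1$-norm of the gradient yields the bound $-\frac{g}{L_1L_2}\int|\nabla_{\bf y}{\bf u}|$; handling this term correctly is precisely what preserves the Bingham character of the limit. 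Finally, to bring the pressure contribution into the form appearing in \eqref{limitproblem}, I would integrate by parts in $\hat x$ (permitted by the compact support of $\boldsymbol\Psi$ in $\omega$): $\int p\,\mathrm{div}_{\hat x}\hat\Psi=-\int\nabla_{\hat x}p\cdot\hat\Psi$, and the boundedness of the remaining terms forces $\nabla_{\hat x}p\in L^2(\omega)^2$. Since ${\bf u}$ satisfies \eqref{cond1}--\eqref{cond2}, a direct integration by parts shows $\int_{\omega\times Y^*}\nabla_{\hat x}p\cdot\hat u\,d\hat xd{\bf y}=0$, so the pressure term rewrites as $-\int\nabla_{\hat x}p\cdot(\hat\Psi-\hat u)$. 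Collecting everything and factoring out the common $\frac{1}{L_1L_2}$ yields \eqref{limitproblem} for smooth $\boldsymbol\Psi$, and a density argument in $\mathcal{V}$ completes the proof.
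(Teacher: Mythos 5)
Your proof is correct and follows essentially the same route as the paper: take oscillating test functions built from $Y$-periodic, $\mathrm{div}_{\bf y}$-free profiles vanishing on $\partial Y^\ast_{inf}\cup\partial Y^\ast_{sup}$, unfold the variational inequality, pass to the limit using weak--strong pairing for the cross term, weak lower semicontinuity of the squared $L^2$-norm and of the $L^1$-norm (via Proposition \ref{propgradv}) for the two negative terms, then rewrite the pressure contribution with an integration by parts in $\hat x$ using \eqref{cond1}--\eqref{cond2}. The only cosmetic difference is that you work directly with general two-scale test functions $\boldsymbol\Psi(\hat x,{\bf y})$ and divide by $\eps$ at the start, while the paper uses tensor-product test functions $\phi(\hat x)\boldsymbol\psi({\bf x}/\eps)$ and keeps the explicit powers of $\eps$, invoking density of $\mathcal{D}(\omega)\otimes H^1(Y^*)^3$ at the end; the substance is identical.
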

\begin{proof}
In view of Proposition \ref{limit u} and Proposition \ref{limitp} there exist ${\bf u} \in L^2(\omega; H^1(Y^*)^3)$ and $p \in L^2(\omega \times Y^*)$ such that, up to a subsequence, \eqref{limfinalu} and \eqref{limfinalp} hold.

Now, we want to prove that the couple $({\bf u},\,p)$ satisfies the limit problem \eqref{limitproblem} and hence, by uniqueness, that the previous convergences hold for the whole sequences. To this aim, we consider ${\bf v}^\eps({\bf x})=\phi(\hat x) {\boldsymbol \psi}(\bf x/\eps)$ where $\phi \in \mathcal{D}(\omega)$ and ${\boldsymbol \psi }\in H^1(Y^*)^3$ is a $Y-$periodic function such that ${\boldsymbol \psi}=0 \hbox{ on } \partial Y^\ast_{inf}\cup \partial Y^\ast_{sup}$ and $\mathrm{div}_{\bf y} {\boldsymbol \psi}=0$. As previously, it is easy to show that ${\bf v}^\eps$ satisfies \eqref{gradtest}$-$\eqref{testconv3}. Moreover it holds
\begin{equation}\label{divxtest}
\mathrm{div}_{\bf x}({\bf v^\eps})=\hat\psi\Big(\frac{\bf x}{\eps}\Big) \mathrm{div}_{\hat x}\phi(\hat{x}).
\end{equation}
Hence, by $vi)$ and $vii)$ in Proposition \ref{properties}, we get
\begin{equation}\label{testconv5}
\mathcal{T_\eps}\Big(\mathrm{div}_{\bf x}{\bf v}^\eps\Big) \eto  \hat\psi \mathrm{div}_{\hat x}\phi \quad \hbox{s - }L^2(\omega \times Y^*)^3 .
\end{equation}
Let us take ${\bf v}={\bf v}^\eps$ as test function in \eqref{vpfluid}. Taking into account \eqref{pipressure} in Proposition \ref{limitp} and by applying the unfolding operator, we get
\begin{equation}\label{unfoldvarbis}
\begin{aligned}
&\mu \eps^2 \int_{\omega\times Y^*} \mathcal{T}_\eps(\nabla{\bf u}_\eps)\cdot \mathcal{T}_\eps(\nabla({\bf v}^\eps-{\bf u}_\eps))\, d{\hat x}d{\bf y} + g\eps \int_{\omega\times Y^*}\mathcal{T}_\eps(|\nabla{\bf v}_\eps)|\, d{\hat x}d{\bf y}-g\eps \int_{\omega\times Y^*}\mathcal{T}_\eps(|\nabla{\bf u}_\eps)|\, d{\hat x}d{\bf y}\\
& \geq \int_{\omega\times Y^*}\mathcal{T}_\eps ({\bf f}_\eps)\mathcal{T}_\eps({\bf v}^\eps-{\bf u}_\eps)\, d{\hat x}d{\bf y}+\int_{\omega\times Y^*} \mathcal{T}_\eps(P_\eps|_{\Omega_\eps})\mathcal{T_\eps}\Big(\mathrm{div}_{\bf x}({\bf v}^\eps-{\bf u}_\eps)\Big)\, d{\hat x}d{\bf y}.
\end{aligned}
\end{equation} 
The first integral on the left-hand side of \eqref{unfoldvarbis} can be written as

\begin{equation*}
\begin{aligned}
&\mu \eps^2 \int_{\omega\times Y^*} \mathcal{T}_\eps(\nabla{\bf u}_\eps)\cdot \mathcal{T}_\eps(\nabla({\bf v^\eps}-{\bf u}_\eps))\, d{\hat x}d{\bf y}\\
&=\mu \eps^2 \int_{\omega\times Y^*} \mathcal{T}_\eps(\nabla{\bf u}_\eps)\cdot \mathcal{T}_\eps(\nabla{\bf v^\eps})\, d{\hat x}d{\bf y}-\mu \eps^2 \int_{\omega\times Y^*} \mathcal{T}_\eps(\nabla{\bf u}_\eps)\cdot \mathcal{T}_\eps(\nabla{\bf u}_\eps)\, d{\hat x}d{\bf y}.
\end{aligned}
\end{equation*}
As in the proof of Proposition \ref{limitp}, according to convergences \eqref{uconv2}, \eqref{testconv2}, \eqref{testconv3},  for the first term we have \eqref{pres1}.
Moreover, by standard weak lower-semicontinuity argument, we have
\begin{equation}\label{lim1}
\liminf_{\eps\to0}\eps^2 \mu\int_{\omega\times Y^*} |\mathcal{T}_\eps(\nabla{\bf u}_\eps)|^2\, d{\hat x}d{\bf y}\geq \mu\int_{\omega\times Y^*} |\nabla_{\bf y}{\bf u}|^2\, d{\hat x}d{\bf y}.
\end{equation}
As in the proof of Proposition \ref{limitp}, the second integral in the left-hand side of \eqref{unfoldvarbis} satisfies \eqref{pres2}.
\noindent Moreover, Propositon \ref{propgradv},  \eqref{uconv2} and the standard weak lower-semicontinuity argument give
\begin{equation}\label{lim3}
\liminf_{\eps\to0}g\eps \int_{\omega\times Y^*}\mathcal{T}_\eps(|\nabla{\bf u}_\eps|)\, d{\hat x}d{\bf y}\geq g \int_{\omega\times Y^*} |\nabla_{\bf y}{\bf u}|\, d{\hat x}d{\bf y}.
\end{equation}
\noindent By convergences \eqref{uconv1}, \eqref{testconv1} and \eqref{convforce} we get 
\begin{equation}\label{lim4}
\int_{\omega\times Y^*}\mathcal{T}_\eps ({\bf f}_\eps)\mathcal{T}_\eps({\bf v_\eps}-{\bf u}_\eps)\, d{\hat x}d{\bf y} \to \int_{\omega\times Y^*} {\bf f} (\phi \boldsymbol \psi-{\bf u})\, d{\hat x}d{\bf y}.
\end{equation}
Taking into account \eqref{divxtest} and that $\hbox{div}_{\bf x} {\bf u}_\eps=0$,  the second integral in the right-hand side of \eqref{unfoldvarbis} satisfies
\begin{equation*}
\begin{array}{c}
\displaystyle \int_{\omega\times Y^*} \mathcal{T}_\eps(P_\eps|_{\Omega_\eps}) \mathcal{T}_\eps(\hbox{div}_{\bf x}({\bf v_\eps}-{\bf u}_\eps))\, d{\hat x}d{\bf y}=\int_{\omega\times Y^*} \mathcal{T}_\eps(P_\eps|_{\Omega_\eps}) \mathcal{T}_\eps(\hbox{div}_{\bf x}({\bf v_\eps}))\, d{\hat x}d{\bf y}=\\
\\
=\displaystyle \int_{\omega\times Y^*} \mathcal{T}_\eps(P_\eps|_{\Omega_\eps})  \mathcal{T}_\eps(\hbox{div}_{\bf x}\phi)\hat\psi\, d{\hat x}d{\bf y}.
\end{array}
\end{equation*}
Then, by convergences \eqref{limfinalp} and \eqref{testconv5}, we obtain
\begin{equation}\label{lim5}
\int_{\omega\times Y^*} \mathcal{T}_\eps(P_\eps|_{\Omega_\eps}) \mathcal{T}_\eps(\hbox{div}({\bf v_\eps}-{\bf u}_\eps))\, d{\hat x}d{\bf y}\to \int_{\omega\times Y^*} p\, \hbox{div}_{\hat x}\phi \hat\psi\, d{\hat x}d{\bf y}.
\end{equation}
Finally, by collecting together convergences \eqref{pres1}, \eqref{pres2}, \eqref{lim1}, \eqref{lim5}, we obtain the following variational inequality
\begin{align}\label{limitd}
&\mu  \int_{\omega\times Y^*}\nabla_{\bf y} {\bf u} \cdot \big(\phi \nabla_{\bf y}\boldsymbol \psi- \nabla_{\bf y} u\big)d{\hat x}d{\bf y} + g\int_{\omega\times Y^*}|\phi \nabla_{\bf y} \boldsymbol \psi|\, d{\hat x}d{\bf y} -g \int_{\omega\times Y^*} |\nabla_{\bf y} {\bf u}|\, d{\hat x}d{\bf y}\nonumber\\ 
\nonumber\\
&\geq \int_{\omega\times Y^*} {\bf f} (\phi \boldsymbol \psi-{\bf u})\, d{\hat x}d{\bf y} + \int_{\omega\times Y^*} p \hbox{div}_{\hat x}\phi \hat \psi\, d{\hat x}d{\bf y},\\
\nonumber\\
&\forall \phi \in \mathcal{D}(\omega) \text{ and  } {\boldsymbol \psi }\in H^1(Y^*)^3 \text{ with } \hbox{div}_{\bf y}(\boldsymbol \psi)=0 \text{ and } {\boldsymbol \psi}=0 \hbox{ on } \partial Y^\ast_{inf}\cup \partial Y^\ast_{sup},\nonumber
\end{align}
which by density implies
\begin{equation}\label{limitdbis}
\begin{array}{l}
\displaystyle\mu  \int_{\omega\times Y^*}\nabla_{\bf y} {\bf u} \cdot \big(\nabla_{\bf y}\boldsymbol \Psi- \nabla_{\bf y} u\big)d{\hat x}d{\bf y} + g\int_{\omega\times Y^*}|\nabla_{\bf y} \boldsymbol \Psi|\, d{\hat x}d{\bf y} -g \int_{\omega\times Y^*} |\nabla_{\bf y} {\bf u}|\, d{\hat x}d{\bf y}\\ 
\\
\displaystyle\geq \int_{\omega\times Y^*} {\bf f} (\boldsymbol \Psi-{\bf u})\, d{\hat x}d{\bf y} + \int_{\omega\times Y^*} p \hbox{div}_{\hat x} \hat\Psi\, d{\hat x}d{\bf y},\\
\\
\forall \boldsymbol \Psi \in L^2(\omega; H^1(Y^*)^3), \hbox{with} \ \ \mathrm{div}_{\bf y} \boldsymbol \Psi=0 \text{ and }{\boldsymbol \Psi}=0 \hbox{ on } \omega \times\partial Y^\ast_{inf}\cup \omega \times\partial Y^\ast_{sup}.
\end{array}
\end{equation}
Since $p$ does not depend on $\bf y$, by \eqref{cond1} and \eqref{cond2}, we get
\begin{equation}\label{pres7}
\begin{array}{c}
\displaystyle\int_{\omega\times Y^*} p \hbox{div}_{\hat x} \hat{\Psi}\, d{\hat x}d{\bf y}=\displaystyle\int_{\omega\times Y^*} p \hbox{div}_{\hat x} \hat{\Psi}\, d{\hat x}d{\bf y}-\int_{\omega\times Y^*} p \hbox{div}_{\hat x}\left(\int_{Y^\ast}\hat{u}d{\bf y}\right)d{\hat x}=\\\\
\displaystyle\int_\omega p \hbox{div}_{\hat x}\left(\int_{Y^*} \left(\hat{\Psi}-\hat{u}\right)d{\bf y}\right)\, d{\hat x}=
-\displaystyle\int_{\omega\times Y^*} \nabla_{\hat x}p \left(\hat{\Psi}-\hat{u}\right)\, d{\hat x}d{\bf y}+\int_{\partial \omega}p\Big(\int_{Y^*} {\hat \Psi} d{\bf y} \Big) \cdot n ds.
\end{array}
\end{equation}
Therefore \eqref{limitdbis} and \eqref{pres7} imply \eqref{limitproblem}. \end{proof}

\section{Conclusions}\label{End}
In this section, we are interested in the particular interpretation of the limit problem \eqref{limitproblem} in the case of forces independent of the vertical variable, see Remark \ref{body force}.
As usual in the asymptotic study of fluids in thin domains and in classical porous media, we want to describe the limit problem introducing an auxiliary problem on the basic cell. More in particular, following the ideas of Lions and Sanchez-Palencia in \cite{LS} for the study of the Bingham flow in a classical porous medium, we want to show that the limit problem \eqref{limitproblem} in Theorem \ref{teolim} can be interpreted as a non linear Darcy law. Therefore, we obtain the following proposition:
\begin{proposition}\label{limitthin}
Let ${\bf f}_\eps({{\bf x}})=(\hat f(\hat{x}),0)$ with $\hat f \in L^2(\omega)^2$. Let the velocity of filtration denoted by $$\hat V=\dfrac{1}{L_1L_2}\displaystyle\int_{Y^{\ast}} \hat u \,d{\bf y}=\int_0^{G_1}\hat{U}(\hat{x})\, dy_3,$$ 
where $\hat U$ and $\hat u$ are defined in Proposition \ref{behavior velocity} and Proposition \ref{limit u}, respectively.
Then, the limit problem \eqref{limitproblem}  is equivalent to the non linear Darcy law 
\begin{equation}\label{conditions}
\left\{
\begin{array}{ll}
\hat V(\hat{x})= \mathcal{A}(\hat f(\hat{x})-\nabla_{\hat{x}}p)& \hbox{ in } \omega,\\
\mathrm{div}_{\hat x}\hat V =0 &\hbox{ in } \omega, \\ 
\hat V(\hat{x}) \cdot n =0 &\hbox{ on }  \partial \omega,
\end{array}
\right.
\end{equation}
where $p$ is defined in Proposition \ref{limitp} and the nonlinear operator $A(\cdot):\mathbb{R}^2\to \mathbb{R}^2$ is defined by
$$\mathcal{A}(\hat{\xi})=\frac{1}{L_1L_2}\int_{Y^*} \boldsymbol \chi(\hat \xi)\,d{\bf y},$$
$\boldsymbol \chi(\hat \xi)$ being the unique solution of the following Bingham local problem on the basic cell
\begin{equation}\label{localbingham}
\left\{
\begin{array}{l}
\text{Find }\boldsymbol \chi(\hat\xi)\in \mathcal{V}\text{ such that }\\
\\
\displaystyle \mu \int_{Y^*} \nabla_{\bf y}{\boldsymbol \chi(\hat\xi)} \cdot \nabla_{\bf y}({\boldsymbol \Psi}-{\boldsymbol \chi}(\hat\xi))\,d{\bf y} + g \int_{ Y^*}|\nabla_{\bf y}{\boldsymbol \Psi}|\,d{\bf y} -g \int_{Y^*}|\nabla_{\bf y} {\boldsymbol \chi}(\hat\xi)|\,d{\bf y}
\geqslant \int_{Y^*} \hat\xi({\bf \Psi}-{\boldsymbol \chi}(\hat\xi))\,d{\bf y}\\
\\
\forall \boldsymbol \Psi \in \mathcal{V}.
\end{array}
\right.
\end{equation}
\end{proposition}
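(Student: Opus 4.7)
The plan is to decouple the global limit inequality \eqref{limitproblem} into a pointwise family of local Bingham cell problems, one for each macroscopic point $\hat x\in\omega$, by reading the combination $\hat\xi(\hat x):=\hat f(\hat x)-\nabla_{\hat x}p(\hat x)$ as an effective bulk forcing on $Y^*$. Once the identification ${\bf u}(\hat x,\cdot)=\boldsymbol\chi(\hat\xi(\hat x))$ is in hand, averaging over $Y^*$ produces the nonlinear Darcy formula $\hat V=\mathcal{A}(\hat f-\nabla_{\hat x}p)$, while the remaining equations $\mathrm{div}_{\hat x}\hat V=0$ in $\omega$ and $\hat V\cdot n=0$ on $\partial\omega$ are already contained in \eqref{cond1}--\eqref{cond2}, and the alternative expression of $\hat V$ in terms of $\hat U$ is precisely \eqref{cond0}.

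\textbf{Key steps.} First, I would check that for every $\hat\xi\in\mathbb{R}^2$ the cell problem \eqref{localbingham} admits a unique solution $\boldsymbol\chi(\hat\xi)$: the Bingham functional $\boldsymbol\psi\mapsto\tfrac{\mu}{2}\int_{Y^*}|\nabla_{\bf y}\boldsymbol\psi|^2+g\int_{Y^*}|\nabla_{\bf y}\boldsymbol\psi|-\int_{Y^*}\hat\xi\cdot\hat\psi$ is strictly convex, proper, coercive (via Poincar\'e) and lower semicontinuous on the closed convex subset of $H^1(Y^*)^3$ made of divergence-free fields vanishing on $\partial Y^*_{inf}\cup\partial Y^*_{sup}$, so existence and uniqueness follow from the Duvaut--Lions theory \cite{DL}. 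A standard strong-monotonicity estimate further gives $\|\boldsymbol\chi(\hat\xi_1)-\boldsymbol\chi(\hat\xi_2)\|_{H^1(Y^*)^3}\le C|\hat\xi_1-\hat\xi_2|$, so $\hat x\mapsto\boldsymbol\chi(\hat\xi(\hat x))$ is measurable as soon as $\hat\xi\in L^2(\omega)^2$. Second, setting $\widetilde{\bf u}(\hat x,{\bf y}):=\boldsymbol\chi(\hat\xi(\hat x))({\bf y})$, I would verify that for every $\boldsymbol\Psi\in\mathcal{V}$ the slice $\boldsymbol\Psi(\hat x,\cdot)$ is admissible in \eqref{localbingham}; testing the local inequality against $\boldsymbol\Psi(\hat x,\cdot)$ at each $\hat x$ and integrating in $\hat x$ yields precisely \eqref{limitproblem} with $(\widetilde{\bf u},p)$ in place of $({\bf u},p)$ (the right-hand side $\int_{Y^*}\hat\xi\cdot(\hat\Psi-\widehat{\widetilde u})$ integrates to $\int_{\omega\times Y^*}(\hat f-\nabla_{\hat x}p)(\hat\Psi-\widehat{\widetilde u})$). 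Uniqueness of the couple solving \eqref{limitproblem} then forces ${\bf u}=\widetilde{\bf u}$, and averaging over $Y^*$ delivers the Darcy law.

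\textbf{Main obstacle.} The delicate point is legitimately freezing $\hat x$ when passing to the cell problem: Theorem \ref{teolim} only gives $p\in L^2(\omega)$, whence $\nabla_{\hat x}p$ is a priori merely a distribution, while \eqref{localbingham} requires $\hat\xi(\hat x)\in\mathbb{R}^2$ at almost every $\hat x$. I would resolve this by first testing \eqref{limitproblem} with fields $\boldsymbol\Psi\in\mathcal{V}$ which additionally satisfy $\int_{Y^*}\hat\Psi\,d{\bf y}=0$; for such tests the pressure term drops out thanks to the computation \eqref{pres7}, and one obtains a divergence-free projected cell inequality whose unique solution is $\boldsymbol\chi$ applied to a suitable gradient $-\nabla_{\hat x}\pi$. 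Matching $\pi$ with $p$ up to a constant via the incompressibility of the filtration velocity, and then using the Lipschitz bound $|\mathcal{A}(\hat\xi)|\le C|\hat\xi|$ together with $\hat V\in L^2(\omega)^2$, bootstraps $\nabla_{\hat x}p$ into $L^2(\omega)^2$, so that the original decoupling becomes meaningful pointwise. Aside from this regularity issue, the rest of the argument is an essentially algebraic consequence of the product structure of the admissible set $\mathcal{V}$ in the ${\bf y}$-variable.
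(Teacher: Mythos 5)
Your overall strategy coincides with the paper's: view $\hat\xi(\hat x):=\hat f(\hat x)-\nabla_{\hat x}p(\hat x)$ as an effective forcing, identify ${\bf u}(\hat x,\cdot)=\boldsymbol\chi(\hat\xi(\hat x))$ by comparing \eqref{limitproblemD} with \eqref{localbingham}, and average over $Y^*$ to get the Darcy formula, while $\mathrm{div}_{\hat x}\hat V=0$ in $\omega$ and $\hat V\cdot n=0$ on $\partial\omega$ are simply \eqref{cond1}--\eqref{cond2}. The added discussion of existence, uniqueness and the strong-monotonicity Lipschitz bound $\|\boldsymbol\chi(\hat\xi_1)-\boldsymbol\chi(\hat\xi_2)\|_{H^1(Y^*)^3}\le C|\hat\xi_1-\hat\xi_2|$ is correct and fills in material the paper leaves implicit.

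However, your proposed fix for the ``main obstacle'' has a flaw. The bound $|\mathcal{A}(\hat\xi)|\le C|\hat\xi|$ controls $\hat V$ in terms of $\hat\xi=\hat f-\nabla_{\hat x}p$, which is the wrong direction for bootstrapping $\nabla_{\hat x}p$ from the knowledge that $\hat V\in L^2(\omega)^2$. A reverse estimate $|\hat\xi|\le C|\mathcal{A}(\hat\xi)|$ cannot hold: the yield stress makes $\mathcal{A}$ vanish identically on a neighborhood of $0$ in $\mathbb{R}^2$ (for $|\hat\xi|$ small the cell fluid stays rigid and $\boldsymbol\chi(\hat\xi)=0$), so $\mathcal{A}$ is not injective and no inverse exists. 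Note also that restricting to zero-mean $\boldsymbol\Psi$ is not what removes the pressure: for \emph{every} $\boldsymbol\Psi\in\mathcal{V}$ the constraints $\mathrm{div}_{\hat x}\bigl(\int_{Y^*}\hat\Psi\,d{\bf y}\bigr)=0$ and $\bigl(\int_{Y^*}\hat\Psi\,d{\bf y}\bigr)\cdot n=0$ already kill both the interior and boundary terms in \eqref{pres7}, without any regularity of $p$ beyond $L^2(\omega)$; the inequality on $\mathcal{V}$ is intrinsically pressure-free and $p$ plays the role of a Lagrange multiplier for the averaged constraints, as in the paper's passage from \eqref{der1} to \eqref{conditions}. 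In fairness, the paper's proof shares this informality — it writes ${\bf u}(\hat x,\cdot)=\boldsymbol\chi(\cdot;\hat f-\nabla_{\hat x}p)$ without first establishing $\nabla_{\hat x}p\in L^2(\omega)^2$ — but a fully rigorous treatment would need a separate argument for that pressure regularity; the bootstrap you sketch does not supply it.
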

{\bf Proof.} Let us observe that under the hypotheses on the body force ${\bf f}_\eps$, \eqref{Hf} and \eqref{convforce} are satisfied and the limit problem \eqref{limitproblem} can be rewritten as
\begin{equation}\label{limitproblemD}
\begin{aligned}
\mu \int_{\omega\times Y^*} \nabla_{\bf y}{\bf u} \cdot \nabla_{\bf y}({\boldsymbol \Psi}-{\bf u})\, d{\hat x} d{\bf y} &+ g \int_{\omega\times Y^*}|\nabla_{\bf y}{\boldsymbol \Psi}|\, d{\hat x}d{\bf y} -g \int_{\omega\times Y^*}|\nabla_{\bf y}{\bf u}|\, d{\hat x}d{\bf y}\\
&\geqslant \int_{\omega\times Y^*} \left({\hat f} - \nabla_{\hat x}p\right)(\hat\Psi-{\hat u})\, d{\hat x}d{\bf y} \quad \forall\,\boldsymbol \Psi \in \mathcal{V}.
\end{aligned}
\end{equation}
For every $\hat\xi\in \mathbb{R}^2$ let $\boldsymbol \chi(\hat\xi)=\boldsymbol \chi(\bf y;\hat\xi)$ be the unique solution of problem \eqref{localbingham}.

By \eqref{localbingham} and \eqref{limitproblemD}, we get
$$
{\bf u}(\hat{x},{\bf y})={\boldsymbol \chi}({\bf y}; \hat{f}-\nabla_{\hat{x}}p).
$$
By \eqref{cond1} and \eqref{cond2}, we get
\begin{equation}\label{der1}
\left(
\int_{Y^\ast}{\hat u}(\hat{x},{\bf y})d{\bf y}, \nabla q\right)_\omega=0\qquad\forall q\in H^1(\omega).
\end{equation}
Hence, the pressure $p$ verifies
$$
\left(
\int_{Y^\ast}{\boldsymbol \chi}({\bf y}; \hat{f}-\nabla_{\hat{x}}p)d{\bf y}, \nabla q\right)_\omega=0\qquad\forall q\in H^1(\omega).
$$
Defining the nonlinear operator $\mathcal{A}:\mathbb{R}^2\to \mathbb{R}^2$ by
$$
\mathcal{A}(\hat \xi)=\dfrac{1}{L_1L_2}\int_{Y^\ast}{\boldsymbol \chi}({\bf y};\hat \xi)\, d{\bf y},
$$
the previous relation reads as
$$
\left(\mathcal{A}(\hat{f}-\nabla_{\hat{x}}p), \nabla q\right)_\omega=0\qquad\forall q\in H^1(\omega).
$$
If we define the velocity of filtration as
$$
\hat{V}(x)=\dfrac{1}{L_1L_2}\int_{Y^\ast}{\hat u}(\hat{x},{\bf y})\, d{\bf y}=\int_0^{G_1}\hat{U}(\hat{x})\, dy_3,
$$
by taking into account \eqref{cond1}, \eqref{cond2} and \eqref{der1}, we get the nonlinear Darcy's law \eqref{conditions}.$ \qquad\blacksquare$

\medskip

\begin{remark}\label{remN}
We point out that a newtonian fluid can be seen as a particular case of Bingham fluid. Thus, taking $g=0$, for any fixed $\eps$, problem \eqref{vpfluid} corresponds to the following Stokes system:
\begin{equation*}
\mu \eps^2 \int_{\Omega_\eps} \nabla{\bf u}_\eps\cdot \nabla{\bf v}\, d{\bf x}= \int_{\Omega_\eps} {\bf f}_\eps {\bf v}\, d{\bf x} +  \int_{\Omega_\eps} p_\eps {\rm div}{\bf v}\, d{\bf x}, \; \forall {\bf v} \in H^1_0(\Omega_\eps)^3,
\end{equation*}
where ${\bf f}_\eps({{\bf x}})=(f(\hat{x}),0)$ with $||f||_{L^2(\omega)^2} \leq C$.

Therefore, following a similar approach as the one used to get \eqref{limitproblem}, at the limit, we obtain the following problem
\begin{equation*}
\begin{aligned}
\mu \int_{\omega\times Y^*} \nabla_y{\bf u} \cdot \nabla_y {\boldsymbol \Psi}\, d{\hat x} d{\bf y}= \int_{\omega \times Y^* } (\hat{f}- \nabla_{\hat x} p)\hat\Psi\, d{\hat x}d{\bf y},  \quad \forall {\boldsymbol \Psi} \in \mathcal{V},
\end{aligned}
\end{equation*}
where $\mathcal{V}$ is the functional space introduced in Theorem \ref{teolim}.

%

In fact, notice that we can recover the convergence results given in \cite{BC}, see also \cite{FKT,FTP} for a generalization to the unstationary case, without using extension operators. For instance, Theorem 3.1 and Theorem 3.2 are equivalent to our Proposition \ref{behavior velocity} and Proposition \ref{limitp}, respectively. 
\end{remark}

\bigskip

{\bf Acknowledgments.} This paper was initiated during the visit of the last author at the University of Sannio, whose warm hospitality and support are gratefully acknowledged. G.C. and C.P. are members of GNAMPA (INDAM). The last author was partially supported by grant MTM2016-75465-P from the Ministerio de Economia y Competitividad, Spain and Grupo de Investigación CADEDIF, UCM.

\end{document}